\newtheorem{theorem}{Theorem}
\newtheorem{lemma}[theorem]{Lemma}
\newtheorem{corollary}[theorem]{Corollary}
\newtheorem{proposition}[theorem]{Proposition}
\newcommand{\kommentar}[1]{}
\renewcommand{\pmod}[1]{\,(\mathrm{mod}\,#1)}
\numberwithin{theorem}{section} \numberwithin{equation}{section}
\begin{document}

\title[Dirichlet $L$--functions with prime conductors]{Moments of Dirichlet $L$--functions with prime conductors over function fields}

\author{Hung M. Bui and Alexandra Florea}
\address{Department of Mathematics, University of Manchester, Manchester M13 9PL, UK}
\email{hung.bui@manchester.ac.uk}
\address{Department of Mathematics, Columbia University, New York NY 10027, USA}
\email{aflorea@math.columbia.edu}

\maketitle
\begin{abstract}
We compute the second moment in the family of quadratic Dirichlet $L$--functions with prime conductors over $\mathbb{F}_q[x]$ when the degree of the discriminant goes to infinity, obtaining one of the lower order terms. We also obtain an asymptotic formula with the leading order term for the mean value of the derivatives of $L$--functions associated to quadratic twists of a fixed elliptic curve over $\mathbb{F}_q(t)$ by monic irreducible polynomials, which allows us to show that there exists a monic irreducible polynomial such that the analytic rank of the corresponding twisted elliptic curve is equal to $1$.
\end{abstract}
\section{Introduction}
In this paper we study the family of $L$--functions $L(s,\chi_P)$ as $P$ ranges over monic, irreducible polynomials of degree $2g+1$ over $\mathbb{F}_q[x]$ and the family $L(E \otimes \chi_P,s)$ for $E/\mathbb{F}_q(t)$ a fixed elliptic curve, again as $P$ ranges over monic, irreducible polynomials.

Andrade and Keating \cite{AK} computed the first moment at the central point $1/2$ for the family $L(s,\chi_P)$, with a power saving error term. They also obtained the leading order term for the second moment, which has size $g^3$, and bounded the error term by $O(g^2)$. We improve their result and prove the following.

\begin{theorem}\label{mainthm}
For $q$ an odd number, we have
$$\frac{1}{|\mathcal{P}_{2g+1}|} \sum_{P\in \mathcal{P}_{2g+1}} L \big( \tfrac{1}{2},\chi_P \big)^2=\frac{g^3}{3\zeta_q(2)}+g^2 \Big( \frac{3}{2}+\frac{1}{2q}\Big) + O_\varepsilon(g^{3/2+\varepsilon}),$$ where the sum is over monic, irreducible polynomials with coefficients in $\mathbb{F}_q[x]$. 
\end{theorem}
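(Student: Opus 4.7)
The plan is to apply an exact functional equation, average over primes $P$, and split into diagonal (squares) and off-diagonal (non-squares) contributions, evaluating the diagonal asymptotically and handling the off-diagonal via Poisson summation.

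Since $L(s, \chi_P)$ is a polynomial of degree $2g$ in $q^{-s}$ satisfying the self-dual functional equation $L(s,\chi_P) = q^{g(1-2s)} L(1-s, \chi_P)$, squaring and symmetrizing yields an identity of the shape
$$L\big(\tfrac12, \chi_P\big)^2 = 2 \sum_{\deg f \leq 2g} \frac{d(f)}{|f|^{1/2}} \chi_P(f) + \text{(boundary corrections)},$$
where $d$ is the divisor function on $\mathbb{F}_q[x]$. Averaging over $P \in \mathcal{P}_{2g+1}$ and splitting the $f$-sum into perfect squares $f = h^2$ (diagonal) and non-squares (off-diagonal) sets up the calculation.

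For the diagonal, since $\deg h \leq g < 2g+1 = \deg P$ one has $P \nmid h$, hence $\chi_P(h^2) = 1$ identically. The diagonal thus reduces essentially to $2 \sum_{\deg h \leq g} d(h^2)/|h|$, which I would evaluate using Perron's formula together with the generating series
$$\sum_h \frac{d(h^2)}{|h|^s} = \frac{\zeta_q(s)^3}{\zeta_q(2s)}.$$
The triple pole of $\zeta_q(s)^3$ at $s=1$ supplies the leading term $g^3/(3\zeta_q(2))$, while the subleading coefficients of the Laurent expansion contribute to the $g^2$ term. The precise coefficient $3/2 + 1/(2q)$ is then assembled by combining these with corrections from the boundary of the approximate functional equation and with hidden main terms emerging from Poisson summation in the off-diagonal.

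For the off-diagonal with $f$ non-square, quadratic reciprocity (applicable since $P$ has odd degree) converts $\chi_P(f)$ into a non-trivial character of $P$ modulo $f$. The Weil bound $\big|\sum_{P \in \mathcal{P}_{2g+1}} \chi_P(f)\big| \ll (\deg f)\, q^{g+1/2}$ alone only yields an error of size $O(g^2)$, matching Andrade--Keating. To sharpen this to $O(g^{3/2 + \varepsilon})$ I would apply the function field Poisson summation formula for quadratic characters developed by Florea in the setting of squarefree discriminants, which replaces the length-$2g$ off-diagonal by a shorter dual sum; hidden main terms from the dual must be extracted and fed back into the $g^2$ coefficient, after which square-root cancellation in the residual sum should close out the error. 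The main obstacle is precisely this step: the delicate matching of secondary main terms coming from the diagonal Laurent expansion, from the boundary of the approximate functional equation, and from the Poisson dual must combine exactly to produce the clean coefficient $3/2 + 1/(2q)$ while leaving only a genuine error of size $g^{3/2+\varepsilon}$.
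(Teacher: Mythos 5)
Your diagonal computation is fine and matches the paper: the generating series $\sum_h \tau(h^2)\,v^{\deg h}=\mathcal{Z}(v)^3/\mathcal{Z}(v^2)$ together with Perron's formula is exactly how the main terms are produced. The genuine gap is in your treatment of the off-diagonal. The Poisson summation machinery of Florea applies to the hyperelliptic ensemble, where one sums the quadratic character over \emph{all} (or all square-free) monic polynomials of a given degree, so the character sum can be completed over residue classes modulo $f$ and dualized. Here the modulus runs over \emph{primes} $P\in\mathcal{P}_{2g+1}$, and there is no way to complete such a sum: detecting irreducibility destroys the Poisson structure, which is precisely why this family is harder than the square-free one (and why Baluyot--Pratt needed GRH in the number-field analogue). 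So the step you identify as the main obstacle is not merely delicate; the tool you propose for it is unavailable, and with it goes your claimed extraction of "hidden main terms" from a dual sum. Moreover, that expectation is misplaced for this theorem: in the paper both the $g^3$ and the $g^2$ terms come from the diagonal alone, and off-diagonal contributions are only expected to enter at the coefficient of $g$, below the target accuracy $g^{3/2+\varepsilon}$.

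What the paper actually does, and what is missing from your proposal, is a truncation argument: after the approximate functional equation, the Dirichlet polynomial of length $2g$ is cut at $2X$ with $X=g-[100\log g]$. On the short piece the Weil bound for $\frac{1}{|\mathcal{P}_{2g+1}|}\sum_P\chi_P(f)$ loses only $q^{-g+X}g^2$, which is negligible because the sum is now shorter; no off-diagonal evaluation is needed. The tail $2X<\deg f\leq 2g$ is written by Perron's formula as a contour integral of the shifted second moment $\frac{1}{|\mathcal{P}_{2g+1}|}\sum_P\mathcal{L}(u/\sqrt{q},\chi_P)^2$ over the unit circle; on a short arc near $u=1$ the approximate functional equation and the diagonal computation give a piece of the main term $\frac{g^2(g-X)}{2\zeta_q(2)}$ that recombines with the truncated diagonal, while on the rest of the circle one uses unconditional Soundararajan-type upper bounds for moments (Proposition 4.1 / Corollary 4.3 in the paper). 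Choosing $\theta_1=1/\sqrt{g}$ and $X=g-O(\log g)$ is what yields the error $O_\varepsilon(g^{3/2+\varepsilon})$. Without this (or some other device replacing your inapplicable Poisson step), your argument only recovers the Andrade--Keating error term $O(g^2)$ and cannot isolate the coefficient $\tfrac32+\tfrac1{2q}$. A smaller point: keep the two sums of the exact approximate functional equation (lengths $2g$ and $2g-1$) separate rather than symmetrizing with a factor $2$, since their diagonal constants differ ($g^2$ versus $g^2(\tfrac12+\tfrac1{2q})$) and the asymmetry is needed to get the stated coefficient.
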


We also prove the following.

\begin{theorem}\label{mainthm1}
Let $q$ be a prime power with $(q,6)=1$. Let $E/\mathbb{F}_q(t)$ be a fixed elliptic curve with discriminant $\Delta$ and $M$ be the product of the finite primes where $E$ has multiplicative reduction. Then for $g \geq \frac{ \deg(\Delta)-2}{2}$, we have 
\begin{align*}
\frac{1}{|\mathcal{P}_{2g+1}|} \sum_{P\in \mathcal{P}_{2g+1}} \epsilon^-L' ( E \otimes \chi_P, \tfrac{1}{2})&=2(\log q)\big(\mathcal{A}_E(1;1)-\epsilon_{2g+1} \epsilon(E)\mathcal{A}_E(M;1)\big)L(\emph{Sym}^2 E, 1)g\\
&\qquad\qquad + O_\varepsilon(g^{1/4+\varepsilon}),
\end{align*}
where $\epsilon^{-}$ is defined in \eqref{epsilon_minus} and \eqref{tfe}, and $\mathcal{A}_E(N;u)$ is given by \eqref{def_bm}. In particular, unless $\epsilon_{2g+1} \epsilon(E)=1$ and $M=1$, we obtain an asymptotic formula.
\end{theorem}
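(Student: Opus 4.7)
The plan is to apply an approximate functional equation for $L'(E\otimes\chi_P,1/2)$, symmetrized using the projector $\epsilon^-$ from \eqref{epsilon_minus}--\eqref{tfe}, swap the order of summation so that characters $\chi_P(f)$ are averaged over $P$ for each fixed polynomial $f$, and then extract the leading term from the contribution of perfect squares. The recognition of $L(\mathrm{Sym}^2E,1)$ and the two terms $\mathcal{A}_E(1;1)$ and $\mathcal{A}_E(M;1)$ should come naturally once the diagonal is written as an Euler product with appropriate corrections at the bad primes.

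First, I would differentiate the approximate functional equation for $L(E\otimes\chi_P,s)$ at $s=1/2$ and combine the two halves via $\epsilon^-$. Writing $\lambda_E(f)$ for the Dirichlet coefficients of $L(E,s)$, this should produce an identity of the shape
\begin{equation*}
\epsilon^- L'(E\otimes\chi_P,\tfrac{1}{2}) = \sum_{f}\frac{\lambda_E(f)\chi_P(f)}{\sqrt{|f|}}\bigl(W_1(\deg f) - \epsilon_{2g+1}\epsilon(E)W_2(\deg f)\bigr),
\end{equation*}
where $W_1,W_2$ are explicit weights essentially linear in $\deg f$ (the extra logarithmic factor being the hallmark of the derivative) supported on $\deg f$ of order $g$, and $W_2$ incorporates the local data at the bad primes encoded by $M$.

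Second, after swapping summations I need $\sum_{P\in\mathcal{P}_{2g+1}}\chi_P(f)$. For $f$ a square coprime to the ramified locus, this equals $|\mathcal{P}_{2g+1}|$ up to a negligible correction. For non-square $f$, function-field quadratic reciprocity turns the sum into $\sum_{P\in\mathcal{P}_{2g+1}}\chi_f(P)$, which I would evaluate by applying Perron's formula to $-L'/L(s,\chi_f)$, shifting the contour past the zeros on $|u|=q^{-1/2}$, and invoking the Riemann hypothesis for $L(s,\chi_f)$. The diagonal contribution from $f=\ell^2$ then yields a sum of the form $\sum_\ell\lambda_E(\ell^2)|\ell|^{-1}W_i(2\deg\ell)$; recognising $\sum_\ell\lambda_E(\ell^2)|\ell|^{-s}$ (with the local corrections at good and bad primes handled separately) as essentially $L(\mathrm{Sym}^2E,s)$ and evaluating at $s=1$ produces the coefficients $\mathcal{A}_E(1;1)$ and $\mathcal{A}_E(M;1)$, while each linear weight $W_i$ contributes a factor $(\log q)g$, giving the claimed main term.

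The key difficulty is to push the off-diagonal error down to $O(g^{1/4+\varepsilon})$: a trivial application of Weil's bound for $\sum_P\chi_f(P)$ summed over non-square $f$ of degree up to $2g$ is too lossy. To reach the $1/4$ exponent I expect to dyadically decompose in $\deg f$, apply Poisson summation (or equivalently the functional equation of $L(s,\chi_f)$) to the dual side to expose additional oscillation in $P$, and then optimize the length of the approximate functional equation against the off-diagonal cost. Balancing these two competing contributions is the crux of the proof; once this is in place, tracking the bad-prime factors through the Euler product computation is essentially bookkeeping, and the hypothesis $g\geq(\deg\Delta-2)/2$ is precisely what ensures the approximate functional equation is long enough to support the full main term.
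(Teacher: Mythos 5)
Your overall skeleton — the exact formula for $L'(E\otimes\chi_P,\tfrac12)$ when $\epsilon=-1$ (Lemma \ref{afe1}), multiplication by the projector $\epsilon^-$ so that the $\chi_P(M)$ in the root number produces the two diagonal conditions $f=\square$ and $Mf=\square$, the Weil-type bound for $\sum_P\chi_P(f)$ with $f$ non-square (Lemma \ref{PVlemma}), and the identification of the diagonal Euler product as $\mathcal{A}_E(N;u)\,\mathcal{L}(\mathrm{Sym}^2E,u^2/q)$ with a pole contribution giving the factor $(\log q)\,2g\,\mathcal{A}_E(N;1)L(\mathrm{Sym}^2E,1)$ — matches the paper's proof. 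But the crux, namely how to get the error down to $O_\varepsilon(g^{1/4+\varepsilon})$, is exactly where your plan has a genuine gap. Your proposed mechanism (dyadic decomposition in $\deg f$, then Poisson summation or the functional equation of $L(s,\chi_f)$ to ``expose additional oscillation in $P$'') cannot work as described: over function fields the Weil bound already gives square-root cancellation in the prime sum for each fixed non-square $f$, and this is essentially best possible for an individual $f$. The loss is not in the $P$-sum but in the number of non-square $f$ with $\deg f$ near the endpoint $2g$; summing the Weil bound over those $f$ gives an error of size $O(g)$ (this is precisely the Andrade--Keating barrier the paper points out), which is the same order as the main term, and no further oscillation in $P$ for individual $f$ is available to balance against.

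The paper's actual device, absent from your proposal, is to truncate the Dirichlet polynomial at $X=2g-[100\log g]$ and treat the two ranges by different methods. On the truncated range the Weil bound now suffices, since the non-square contribution is $O(q^{-g+X/2}g^2)$, which is negligible. The tail $X<\deg f\leq 2g+n$ is converted by Perron's formula into a contour integral of $\chi_P(N)\,\mathcal{L}(E\otimes\chi_P,u/\sqrt q)$ over $|u|=1$ with a kernel of size $\ll(2g-X)^2\asymp(\log g)^2$ (Proposition \ref{mainprop1}), and the integral is then bounded using the sharp first absolute moment estimate $\frac{1}{|\mathcal{P}_{2g+1}|}\sum_{P}\big|\mathcal{L}(E\otimes\chi_P,u/\sqrt q)\big|\ll_\varepsilon g^{1/4+\varepsilon}\min\{g,1/\overline{2\theta}\}^{-1/4}$ of Corollary \ref{corollaryupper}, which in turn rests on the Soundararajan-type upper-bound machinery (Lemma \ref{dir_mom}, Lemma \ref{ub_n}, Proposition \ref{upperbounds1}). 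It is this moment upper bound for the family, not extra cancellation in the $P$-sum, that produces the exponent $1/4$; without it (or an equivalent substitute) your outline does not reach $O_\varepsilon(g^{1/4+\varepsilon})$. The remaining bookkeeping in your sketch (the weight $2g+O(\log g)$ giving the factor $g$, the $\alpha$-derivative term being $O(1)$, and the hypothesis $g\geq(\deg\Delta-2)/2$ only ensuring the degree of the twisted $L$-function behaves as expected) is consistent with the paper.
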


We remark that Andrade and Keating's approach would give an error term of size $O(g)$ for the above mean value, and hence fails to give an asymptotic formula.

Define the analytic rank of the twisted elliptic curve $E\otimes\chi_P$ by
\[
r_{E\otimes\chi_P}:=\text{ord}_{s=1/2}L(E\otimes\chi_P,s).
\]
From Theorem \ref{mainthm1} we obtain the following corollary.
\begin{corollary}
Unless $\epsilon_{2g+1} \epsilon(E)=1$ and $M=1$, there exists a monic irreducible polynomial $P$ of degree $2g+1$ such that
\[
r_{E\otimes\chi_P}=1.
\]
\end{corollary}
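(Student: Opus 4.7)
The plan is to deduce the corollary from Theorem~\ref{mainthm1} by a first-moment positivity argument, together with the interpretation of $\epsilon^{-}$ as the indicator that the root number of $L(E\otimes\chi_P,s)$ is $-1$.

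First I would note that whenever $\epsilon^{-}L'(E\otimes\chi_P,\tfrac{1}{2})\neq 0$ for some $P\in\mathcal{P}_{2g+1}$, two things follow at once: the sign of the functional equation of $L(E\otimes\chi_P,s)$ is $-1$, which forces $L(E\otimes\chi_P,\tfrac{1}{2})=0$ and hence $r_{E\otimes\chi_P}\geq 1$; and $L'(E\otimes\chi_P,\tfrac{1}{2})\neq 0$, which forces the order of vanishing at $\tfrac{1}{2}$ to be at most, and therefore exactly, $1$. So it suffices to exhibit a single $P$ with $\epsilon^{-}L'(E\otimes\chi_P,\tfrac{1}{2})\neq 0$, and this is what Theorem~\ref{mainthm1} will yield once its main term is shown to be non-zero.

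The second step is to verify that, under the hypothesis $(\epsilon_{2g+1}\epsilon(E),M)\neq(1,1)$, the coefficient $\mathcal{A}_E(1;1)-\epsilon_{2g+1}\epsilon(E)\mathcal{A}_E(M;1)$ does not vanish. In the case $\epsilon_{2g+1}\epsilon(E)=-1$, the coefficient becomes $\mathcal{A}_E(1;1)+\mathcal{A}_E(M;1)$, which is strictly positive because $\mathcal{A}_E(N;1)$ is defined by a convergent Euler product of positive local factors. In the case $\epsilon_{2g+1}\epsilon(E)=+1$ with $M\neq 1$, the coefficient is $\mathcal{A}_E(1;1)-\mathcal{A}_E(M;1)$; the two Euler products agree at every prime not dividing $M$ and differ at each prime of multiplicative reduction, so a direct local comparison at the primes dividing $M$ yields non-vanishing. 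Together with $L(\mathrm{Sym}^2 E,1)\neq 0$, the leading term is then a nonzero constant multiple of $g$, which dominates the error term $O(g^{1/4+\varepsilon})$ for all sufficiently large $g$ within the admissible range $g\geq(\deg(\Delta)-2)/2$. A pigeonhole argument on the resulting non-zero average then produces at least one $P\in\mathcal{P}_{2g+1}$ with the desired property.

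The only genuine obstacle is the non-vanishing check on $\mathcal{A}_E(1;1)-\epsilon_{2g+1}\epsilon(E)\mathcal{A}_E(M;1)$, and specifically the subtle case $\epsilon_{2g+1}\epsilon(E)=+1$ with $M\neq 1$, where one has to unfold the Euler-product definition of $\mathcal{A}_E(N;u)$ and compare the local factors at primes of multiplicative reduction. Once this local computation is in place, the corollary is an immediate consequence of Theorem~\ref{mainthm1} combined with the sign-of-functional-equation interpretation of $\epsilon^{-}$.
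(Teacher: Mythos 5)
Your proposal follows essentially the same route as the paper: the corollary is deduced directly from Theorem \ref{mainthm1} by noting that a nonzero first moment forces the existence of some $P\in\mathcal{P}_{2g+1}$ with $\epsilon^{-}L'(E\otimes\chi_P,\tfrac12)\neq 0$, and since $\epsilon^{-}\in\{0,1\}$ is nonzero exactly when the root number in \eqref{tfe} is $-1$, such a $P$ has $L(E\otimes\chi_P,\tfrac12)=0$ and $L'(E\otimes\chi_P,\tfrac12)\neq 0$, i.e.\ $r_{E\otimes\chi_P}=1$ (for $g$ large enough that the main term of size $g$ dominates the $O_\varepsilon(g^{1/4+\varepsilon})$ error). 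The paper gives no more detail than this, relying on the assertion inside Theorem \ref{mainthm1} that the formula is a genuine asymptotic unless $\epsilon_{2g+1}\epsilon(E)=1$ and $M=1$.

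One caveat on the extra verification you attempt: your claim that $\mathcal{A}_E(M;1)$ is an Euler product of \emph{positive} local factors is not correct in general. From \eqref{def_bm}, at a prime $Q\mid M$ the local factor of $\mathcal{A}_E(M;u)$ at $u=1$, $\alpha=0$ is
\[
\sum_{j\ \mathrm{odd}}\frac{\lambda(Q)^j}{|Q|^{j/2}}\Big/\big(\text{local }\mathrm{Sym}^2\text{ factor}\big)
\quad\text{with}\quad \lambda(Q)=\pm|Q|^{-1/2},
\]
so this factor is negative whenever $\lambda(Q)<0$ (nonsplit multiplicative reduction), and $\mathcal{A}_E(M;1)$ can have either sign. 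The clean way to handle both sign cases at once is the ratio computation: the local factors of $\mathcal{A}_E(M;\cdot)$ and $\mathcal{A}_E(1;\cdot)$ agree off $M$, while at $Q\mid M$ their ratio is $\lambda(Q)|Q|^{-1/2}=\pm|Q|^{-1}$, so
\[
\mathcal{A}_E(M;1)=\mathcal{A}_E(1;1)\prod_{Q\mid M}\frac{\lambda(Q)}{\sqrt{|Q|}},\qquad
|\mathcal{A}_E(M;1)|=|\mathcal{A}_E(1;1)|\prod_{Q\mid M}\frac{1}{|Q|}<|\mathcal{A}_E(1;1)|
\]
whenever $M\neq1$ and $\mathcal{A}_E(1;1)\neq0$ (the latter does hold, since for $N=1$ the local factors are genuinely positive). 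Hence $\mathcal{A}_E(1;1)-\epsilon_{2g+1}\epsilon(E)\mathcal{A}_E(M;1)\neq0$ for $M\neq1$ regardless of the sign $\epsilon_{2g+1}\epsilon(E)$, and for $M=1$ with $\epsilon_{2g+1}\epsilon(E)=-1$ the coefficient is $2\mathcal{A}_E(1;1)>0$. With this repair (and the nonvanishing of $L(\mathrm{Sym}^2E,1)$, which the paper also takes for granted), your argument is complete and coincides with the paper's.
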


Computing moments in families of $L$--functions is a well studied problem, due to its applications to nonvanishing results, the subconvexity problem etc. Jutila \cite{jutila} computed the first moment in the family of quadratic Dirichlet $L$--functions, obtaining a power savings error term. Soundararajan \cite{sound} obtained asymptotics for the second and third moments for the family $L(s,\chi_{8d})$ for $d$ an odd, square-free, positive number. As a corollary, he showed that more than $87.5 \%$ of $L(1/2,\chi_d)$ do not vanish. Chowla conjectured that $L(1/2,\chi_d)$ is never equal to $0$.

Considering the family of quadratic Dirichlet $L$--functions with prime conductor, Jutila \cite{jutila} also showed that
$$ \sum_{\substack{ p \leq X \\ p \equiv 3 \pmod 4}} L (\tfrac{1}{2},\chi_p) = \frac{X \log X}{4}+ O_\varepsilon (X (\log X)^{\epsilon}).$$
This family is more difficult to work with due to the fact that the sums are over primes, as opposed to sums over essentially square-free numbers as in \cite{sound}. Conditionally on the Generalized Riemann Hypothesis (GRH), Baluyot and Pratt \cite{bp} obtained the leading order term in the asymptotic for the second moment. Specifically, they proved that
$$ \sum_{\substack{ p \leq X \\ p \equiv 1 \pmod 8}} L ( \tfrac{1}{2},\chi_{p})^2 = c X (\log X)^3 + O(X (\log X)^{11/4}),$$ for some explicit constant $c$. Unconditionally, they obtained upper and lower bounds of the right order of magnitude. Using sieve methods, they also showed that more than $9\%$ of $L(1/2,\chi_p)$ are non-zero. Under GRH, Andrade and Baluyot \cite{ab} computed the $1$--level density in the family and obtained that more than $75\%$ of the $L$--functions evaluated at the central point do not vanish.

The corresponding problem of computing moments in the family of quadratic Dirichlet $L$--functions with prime conductor over function fields was considered by Andrade and Keating \cite{AK}. For the second moment, they showed that
\begin{equation}
\frac{1}{|\mathcal{P}_{2g+1}|} \sum_{P\in \mathcal{P}_{2g+1}} L \big( \tfrac{1}{2},\chi_P \big)^2=\frac{g^3}{3\zeta_q(2)}+ O (g^2 ).
\label{ak_form}
\end{equation}
Similar to the recipe developed by Conrey, Farmer, Keating, Rubinstein and Snaith \cite{cfkrs}, Andrade, Jung and Shamesaldeen \cite{A2} conjectured asymptotic formulas for the integral moments of $L(1/2,\chi_P)$. Specifically, the conjecture is that
\begin{equation}
\frac{1}{|\mathcal{P}_{2g+1}|} \sum_{P\in \mathcal{P}_{2g+1}} L \big( \tfrac{1}{2},\chi_P \big)^k \sim P_k(2g+1),
\label{conj}
\end{equation} where $P_k$ is an explicit polynomial of degree $g(g+1)/2$.

To obtain the asymptotic formula \eqref{ak_form}, Andrade and Keating used the approximate functional equation and then computed a diagonal contribution from square polynomials, which gives the main term of size $g^3$. To bound the contribution from non-squares, they used the Weil bound (which follows from GRH over function fields). To explicitly compute the term of size $g^2$ in Theorem \ref{mainthm}, we are more careful in bounding the error term coming from non-square polynomials. After using the approximate functional equation, we truncate the Dirichlet series close to the endpoint. On the first, longer Dirichlet polynomial, we compute the diagonal term and bound the off-diagonal using the Weil bound. Since this Dirichlet polynomial is shorter than the one considered by Andrade and Keating, we obtain a saving on the error term. For the tail of the Dirichlet polynomial, we use the Perron formula and express it in terms of a shifted moment expression integrated along a circle around the origin. For the integral on a small arc around the origin, we use a recursive formula for the shifted moment and exhibit some explicit cancellation between this term and the diagonal. For the integral along the complement of the small arc, we use upper bounds for moments. A similar idea was used in the computation of lower order terms for the fourth moment of quadratic Dirichlet $L$--functions over function fields in \cite{AF4th}. 
Note that we expect off-diagonal terms to contribute to the coefficient of $g$ in the asymptotic formula \eqref{conj}. For Theorem \ref{mainthm}, both the $g^3$ and $g^2$ terms come from the diagonal. To explicitly compute off-diagonal terms, one would need to use a more refined method rather than relying on the Weil bound. 

In the orthogonal family of quadratic twists of a fixed modular form, the first moment was computed in \cite{BFH}, \cite{MM}, \cite{IW}. The second moment was considered by Soundararajan and Young \cite{soundyoung}, who obtained an asymptotic formula with the leading order term, conditionally on GRH. Unconditionally, they obtained a lower bound which matches the answer conjectured by Keating and Snaith in \cite{ks2}. Some of the work in the present paper is inspired by ideas used by Soundararajan and Young in \cite{soundyoung}. Using similar ideas, also under GRH, Petrow \cite{petrow} obtained several asymptotic formulas for moments of derivatives in this orthogonal family when the sign of the functional equation is equal to $-1$.

Similar problems over function fields were considered in \cite{BFKR}. The authors computed the first and second moments in the family of $L$--functions associated to quadratic twists of a fixed elliptic curve over $\mathbb{F}_q(t)$, and various other moments involving derivatives of these $L$--functions. These asymptotic formulas allow them to deduce lower bounds on the correlations between the analytic ranks of quadratic twists of two distinct elliptic curves. Note that in Theorem \ref{mainthm1} we compute the first moment for derivatives of the $L$--functions with root number equal to $-1$. Our methods do not allow us to obtain the mean value for the $L$--functions themselves, as the error term coming from using upper bounds for moments would dominate the diagonal term which has constant size in this case.

\medskip
{\bf Acknowledgements.} The authors would like to thank Kyle Pratt for useful discussions regarding his work with S. Baluyot on moments of Dirichlet $L$--functions with prime conductor \cite{bp}.

\section{Background}

Fix an odd number $q$. Let $\mathcal{M}$ denote the set of monic polynomials with coefficients in $\mathbb{F}_q$, and $\mathcal{M}_{\leq n}$ be the set of monic polynomials with degree less than or equal to $n$. Let $\mathcal{P}_n$ denote the set of monic, irreducible polynomials over $\mathbb{F}_q[x]$. The norm of a polynomial $f$ is defined to be $|f| =q^{\deg(f)}$.

The Prime Polynomial Theorem states that
\begin{equation}
| \mathcal{P}_n| = \frac{q^n}{n} + O \Big( \frac{q^{n/2}}{n} \Big).
\label{ppt}
\end{equation}

The quadratic character over $\mathbb{F}_q[t]$ is defined as follows. For $P$ a monic, irreducible polynomial and $f$ a monic polynomial, let
$$ \chi_P(f) = \Big(\frac{P}{f} \Big),$$ where $( \frac{P}{f})$ is the quadratic residue symbol over $\mathbb{F}_q[x]$.
\kommentar{$$ \Big( \frac{f}{P} \Big)= 
\begin{cases}
1 & \mbox{ if } P \nmid f, f \text{ is a square modulo }P, \\
-1 & \mbox{ if } P \nmid f, f \text{ is not a square modulo }P, \\
0 & \mbox{ if } P|f.
\end{cases}
$$
}

The zeta-function is defined as 
$$\zeta_q(s) = \sum_{f\in\mathcal{M}} \frac{1}{|f|^s},$$ for $\Re(s)>1$. Since there are $q^n$ monic polynomials of degree $n$, one can easily show that
$$\zeta_q(s) = \frac{1}{1-q^{1-s}},$$ which provides a meromorphic continuation of $\zeta_q$ with a simple pole at $s=1$. We will often make the change of variables $u=q^{-s}$, and then the zeta-function becomes
$$ \mathcal{Z}(u) = \zeta_q(s) = \sum_{f \in\mathcal{M}} u^{\deg(f)} = \frac{1}{1-qu},$$ with a simple pole at $u=1/q$. Note that $\mathcal{Z}(u)$ can also be written in terms of an Euler product as
$$ \mathcal{Z}(u) = \prod_Q \Big(1-u^{\deg(Q)}\Big)^{-1},$$ where the product is over monic, irreducible polynomials in $\mathbb{F}_q[t]$.

For $P$ a monic irreducible polynomial, the $L$--function associated to the quadratic character $\chi_P$ is defined by
$$L(s,\chi_P) = \sum_{f \in \mathcal{M}} \frac{\chi_P(f)}{|f|^s} = \prod_Q \bigg(1-\frac{\chi_P(Q)}{ |Q|^{s}}\bigg)^{-1}.$$ 
Similarly as before, with the change of variables $u=q^{-s}$, one has
$$ \mathcal{L}(u,\chi_P) = \sum_{f \in \mathcal{M}} \chi_P(f) u^{\deg(f)} = \prod_Q \Big(1-\chi_P(Q) u^{\deg(Q)}\Big)^{-1}.$$
By orthogonality of characters, it follows that $\mathcal{L}(u,\chi_P)$ is a polynomial of degree at most $\deg(P)-1$. 

For $P \in \mathcal{P}_{2g+1}$, the $L$--function satisfies the following functional equation
\begin{equation}
 \mathcal{L}(u,\chi_P) = (qu^2)^g \mathcal{L} \Big(  \frac{1}{qu},\chi_P \Big).
 \label{fe}
 \end{equation}

To define elliptic curve $L$--functions over function fields, we take $q$ to be a prime power with $(q,6)=1$. 
Let $E/\mathbb{F}_q(t)$ be an elliptic curve defined by $y^2=x^3+Ax+B$, with $A, B \in \mathbb{F}_q[t]$ and discriminant $\Delta = 4A^3+27B^2$ such that $\deg_t ( \Delta)$ is minimal among models of $E/\mathbb{F}_q(t)$ of this form. 
The normalized $L$--function associated to the elliptic curve $E/\mathbb{F}_q(t)$ has a Dirichlet series and an Euler product which converge for $\Re(s)>1$, as follows.
\begin{align}
L(E,s):=\mathcal{L}(E,u)& = \sum_{f \in \mathcal{M}} \lambda(f) u^{\deg(f)} \label{coefs} \\
&= \prod_{Q | \Delta} \Big(1- \lambda(Q) u^{\deg(Q)}\Big)^{-1} \prod_{Q \nmid \Delta} \Big(1- \lambda(Q) u^{\deg(Q)} + u^{2 \deg(Q)}\Big)^{-1}.\nonumber
\end{align}
One can show that the $L$--function is a polynomial in $u$ with integer coefficients and has degree
\begin{equation}
 \mathfrak{n} := \deg\big( \mathcal{L}(E,u)\big) = \deg(M) + 2 \deg(A) -4,\label{degree}
 \end{equation} where $M$ denotes the product of the finite primes where $E$ has multiplicative reduction and $A$ the product of the finite primes where $E$ has additive reduction (See \cite{hall}). The $L$--function satisfies a functional equation; namely, there exists $\epsilon(E) \in \{ \pm 1 \}$ such that
$$ \mathcal{L}(E,u) = \epsilon(E) (\sqrt{q} u)^{\mathfrak{n}} \mathcal{L} \Big( E, \frac{1}{qu}\,\Big).$$

For $P \in \mathcal{P}_{2g+1}$ and $(P, \Delta)=1$, we consider the twisted elliptic curve $E \otimes \chi_P $ having the affine model $y^2 = x^3+ P^2 Ax+ P^3B$. The $L$--function of the twisted elliptic curve has the following Dirichlet series and Euler product
\begin{align*} 
\mathcal{L}(E \otimes \chi_P,u) &= \sum_{f \in \mathcal{M}} \lambda(f) \chi_P(f) u^{\deg(f)} \\
&= \prod_{Q | \Delta}\Big (1- \lambda(Q) \chi_P(Q) u^{\deg(Q)}\Big)^{-1} \prod_{Q \nmid \Delta P}\Big (1- \lambda(Q) \chi_P (Q) u^{\deg(Q)} +u^{2 \deg(Q)}\Big)^{-1}.
\end{align*}
The $L$--function $\mathcal{L}(E \otimes \chi_P,u)$ is a polynomial of degree $\mathfrak{n}+ 2 \deg(P)$ and moreover it satisfies the functional equation
\begin{equation}\label{tfe}
\mathcal{L}(  E \otimes \chi_P, u ) = \epsilon\, (\sqrt{q} u)^{\mathfrak{n}+ 2 \deg(P)} \mathcal{L}\Big (  E \otimes \chi_P, \frac{1}{qu}\,\Big),
\end{equation}
where $\epsilon$ is the root number of the twisted elliptic curve and it is equal to
$$\epsilon= \epsilon_{\deg(P)} \epsilon(E)\chi_P(M).$$ In the above equation, $\epsilon_{\deg( P)} \in \{ \pm 1 \}$ is an integer which only depends on the degree of $P$ (see Proposition $4.3$ in \cite{hall}).
We set
\begin{equation}
\epsilon^{-} = \frac{1-\epsilon}{2}.
\label{epsilon_minus}
\end{equation}

 \section{Preliminary Lemmas}
 Here we will gather a few lemmas we need.
 We have the following approximate functional equation.
 \begin{lemma}\label{afe}
For $P \in \mathcal{P}_{2g+1}$, we have
 \begin{align*}
\mathcal{L} \Big( \frac{u}{\sqrt{q}},\chi_P \Big)^2 &= \sum_{f \in \mathcal{M}_{\leq 2g}} \frac{\tau(f)\chi_P(f)u^{\deg(f)} }{\sqrt{|f|}}+u^{4g} \sum_{f \in \mathcal{M}_{\leq 2g-1}} \frac{\tau(f)\chi_P(f) }{\sqrt{|f|}u^{\deg(f)}},
 \end{align*}
 where $\tau(f) = \sum_{f_1f_2=f} 1$ is the divisor function.
 \end{lemma}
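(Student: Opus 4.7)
The plan is to square the functional equation \eqref{fe} and then split the resulting Dirichlet polynomial at the halfway point, as is standard for approximate functional equations.

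First I would expand $\mathcal{L}(u,\chi_P)^2$ as a polynomial in $u$. Since $\chi_P$ is completely multiplicative, the Dirichlet coefficients of $\mathcal{L}(u,\chi_P)^2$ are $\tau(f)\chi_P(f)$, i.e.
\[
\mathcal{L}(u,\chi_P)^2 = \sum_{f\in\mathcal{M}_{\le 4g}} \tau(f)\chi_P(f)\, u^{\deg(f)},
\]
using that $\mathcal{L}(u,\chi_P)$ is a polynomial of degree at most $2g$. Dividing by $q^{n/2}$ and substituting $u\mapsto u/\sqrt q$ yields
\[
\mathcal{L}\Bigl(\tfrac{u}{\sqrt q},\chi_P\Bigr)^2 = \sum_{n=0}^{4g} c_n\, u^n,\qquad c_n := q^{-n/2}\!\!\sum_{f\in\mathcal{M}_n}\tau(f)\chi_P(f).
\]

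Next I would square \eqref{fe}, giving $\mathcal{L}(u,\chi_P)^2=(qu^2)^{2g}\mathcal{L}(1/(qu),\chi_P)^2$, and equate coefficients of $u^m$ on both sides. A direct computation gives the coefficient relation $A_m = q^{m-2g}A_{4g-m}$ for $A_m := \sum_{f\in\mathcal{M}_m}\tau(f)\chi_P(f)$, which, after the $q^{-n/2}$ normalization, simplifies to the clean symmetry
\[
c_n = c_{4g-n}.
\]
This is the essential content of the functional equation in the form needed.

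Finally I would split the sum at the midpoint:
\[
\sum_{n=0}^{4g} c_n u^n \;=\; \sum_{n=0}^{2g} c_n u^n \;+\; \sum_{n=2g+1}^{4g} c_n u^n,
\]
and in the second sum substitute $m=4g-n$, using $c_{4g-m}=c_m$, to rewrite it as $u^{4g}\sum_{m=0}^{2g-1} c_m u^{-m}$. Re-expressing both sums back in terms of the underlying polynomials $f$ gives exactly the two terms in the statement of Lemma~\ref{afe}. There is no real obstacle here; the only mildly tricky bookkeeping is making sure the split at $n=2g$ lines up correctly, i.e.\ that the first sum runs up to $\deg(f)\le 2g$ while the dual sum runs only up to $\deg(f)\le 2g-1$, which is forced by the choice of midpoint and the reflection $n\leftrightarrow 4g-n$.
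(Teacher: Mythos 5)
Your proof is correct and is essentially the standard derivation: the paper itself gives no argument, simply citing equation (4.4) of Andrade--Keating, and the proof there is exactly this squaring of the functional equation \eqref{fe} to get the coefficient symmetry $c_n=c_{4g-n}$ followed by splitting the degree-$4g$ polynomial at the midpoint. Your bookkeeping (the ranges $\deg(f)\le 2g$ and $\deg(f)\le 2g-1$, and the relation $A_m=q^{m-2g}A_{4g-m}$) checks out.
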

 \begin{proof}
 See \cite{AK}, equation $(4.4)$. 
 \end{proof}
 
  \begin{lemma}\label{afe1}
For $P \in \mathcal{P}_{2g+1}$ and $E/\mathbb{F}_q(t)$ such that $\epsilon=-1$, we have
$$ L' ( E \otimes \chi_P, \tfrac{1}{2}) =2(\log q)  \sum_{\deg(f)\leq [\mathfrak{n}/2]+2g+1} \frac{\big([\mathfrak{n}/2]+2g+1-\deg(f)\big)\lambda(f) \chi_P(f)}{\sqrt{|f|}}.$$
 \end{lemma}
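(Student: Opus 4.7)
The plan is to deduce the identity directly from the polynomial functional equation
$$\Phi(u) = \epsilon \, u^N \Phi(1/u)$$
for $\Phi(u) := \mathcal{L}(E \otimes \chi_P, u/\sqrt{q})$, where $N := \mathfrak{n} + 4g+2$ and $\epsilon = -1$ by hypothesis. Writing $\Phi(u) = \sum_{n=0}^N b_n u^n$ with
$$b_n = \sum_{f \in \mathcal{M}_n} \frac{\lambda(f) \chi_P(f)}{\sqrt{|f|}},$$
the functional equation is equivalent to $b_n = -b_{N-n}$; in particular $\Phi(1) = 0$. Under the change of variables $u = q^{1/2-s}$ one has $\Phi(u) = L(E \otimes \chi_P, s)$, so the chain rule gives $L'(E \otimes \chi_P, \tfrac{1}{2}) = -(\log q)\, \Phi'(1)$, and the task reduces to computing $\Phi'(1)$ explicitly.

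The vanishing $\Phi(1) = 0$ lets me factor $\Phi(u) = (u-1) F(u)$ for a polynomial $F$ of degree $N-1$ with $\Phi'(1) = F(1)$. The antisymmetric functional equation for $\Phi$ descends to the \emph{symmetric} functional equation $F(u) = u^{N-1} F(1/u)$, so writing $F(u) = \sum_{n=0}^{N-1} c_n u^n$ the coefficients satisfy $c_n = c_{N-1-n}$. Comparing coefficients in $\Phi(u) = (u-1)F(u)$ gives the simple closed form $c_n = -\sum_{k=0}^n b_k$.

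Finally, I compute $F(1) = \sum_{n=0}^{N-1} c_n$ by folding under $n \mapsto N-1-n$. With $K := [N/2] = [\mathfrak{n}/2] + 2g+1$, the fold collapses the sum to $2\sum_{n=0}^{K-1} c_n$ (any central term arising in the odd-$N$ case being absorbed using $\Phi(1) = 0$). Substituting $c_n = -\sum_{k \leq n} b_k$ and exchanging the order of summation produces
$$F(1) = -2 \sum_{k=0}^{K}(K-k) \, b_k,$$
where the $k=K$ summand vanishes. Combining with $L'(E \otimes \chi_P, \tfrac{1}{2}) = -(\log q) F(1)$ and inserting the definition of $b_k$ recovers the claimed identity.

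The main technical nuisance will be the parity bookkeeping in the fold: when $N$ is even, the antisymmetry $b_n = -b_{N-n}$ automatically forces $b_{N/2} = 0$, making the fold immediate; when $N$ is odd, one must be slightly more careful with the central index $c_K$, using $\Phi(1)=0$ to dispose of the leftover. Everything else is routine coefficient extraction once the factorization $\Phi(u) = (u-1) F(u)$ and the inherited symmetric functional equation for $F$ are in hand.
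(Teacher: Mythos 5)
Since the paper itself does not prove this lemma (it simply cites Lemma 2.3 of \cite{BFKR}), your argument is a genuine derivation, and its skeleton is the natural one: set $\Phi(u)=\mathcal{L}(E\otimes\chi_P,u/\sqrt q)=\sum_{n=0}^{N}b_nu^n$ with $N=\mathfrak{n}+4g+2$, use $\Phi(u)=-u^{N}\Phi(1/u)$, i.e. $b_n=-b_{N-n}$, factor $\Phi(u)=(u-1)F(u)$, transfer to the symmetric relation $c_n=c_{N-1-n}$ for the coefficients of $F$, telescope $c_n=-\sum_{k\le n}b_k$, and fold $F(1)=\Phi'(1)$, together with $L'(E\otimes\chi_P,\tfrac12)=-(\log q)\Phi'(1)$. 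When $N$ is even (equivalently $\mathfrak{n}$ even) this is carried out correctly: $K=N/2=[\mathfrak{n}/2]+2g+1$, $F(1)=-2\sum_{k=0}^{K}(K-k)b_k$, and the stated identity follows.

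The odd-$N$ case, however, is not a ``technical nuisance'' that $\Phi(1)=0$ disposes of; this is a genuine gap. For $N=2K+1$ the fold of $F(1)=\sum_{n=0}^{2K}c_n$ leaves the central coefficient $c_K=-\sum_{k=0}^{K}b_k$, and $\Phi(1)=0$ gives no control over it: the vanishing $\Phi(1)=0$ is automatic from the antisymmetry $b_n=-b_{N-n}$ (it only says $\sum_{k\le K}b_k=-\sum_{k>K}b_k$), so it cannot force $c_K=0$. A toy counterexample to your absorption step: $\Phi(u)=(1-u)(1+u)^2=1+u-u^2-u^3$ satisfies $b_n=-b_{3-n}$, yet $c_1=-(b_0+b_1)=-2\neq0$; here $-\Phi'(1)=4$ while your folded expression $2\sum_{k\le 1}(1-k)b_k$ equals $2$. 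In general, folding $\Phi'(1)=\sum_n nb_n$ via $2\Phi'(1)=\sum_n(2n-N)b_n$ shows that for odd $N$ the functional equation yields $L'(E\otimes\chi_P,\tfrac12)=2(\log q)\sum_{\deg f\le K}\bigl(N/2-\deg f\bigr)\lambda(f)\chi_P(f)/\sqrt{|f|}$, i.e. the claimed formula picks up the extra term $(\log q)\sum_{\deg f\le K}\lambda(f)\chi_P(f)/\sqrt{|f|}$, which need not vanish. So your proof establishes the lemma only when $\mathfrak{n}$ is even; to close the gap you must either verify that the relevant degree is even in the setting of the cited lemma (and say so), or prove the corrected identity with weight $N/2-\deg f$ and explain why the version with $[\mathfrak{n}/2]+2g+1-\deg f$ is the one that holds here.
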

 \begin{proof}
 See Lemma $2.3$ in \cite{BFKR}. 
 \end{proof}
 The following Weil bound holds for character sums over primes.
  \begin{lemma}\label{PVlemma}
 For $f$ not a square, we have
 $$\frac{1}{|\mathcal{P}_{2g+1}|} \sum_{P \in \mathcal{P}_{2g+1}} \chi_P(f) \ll q^{-g} \deg(f).$$ 
 \end{lemma}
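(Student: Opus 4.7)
The plan is to reduce the sum $\sum_{P \in \mathcal{P}_{2g+1}} \chi_P(f)$ to a character sum over primes for a non-trivial Dirichlet character, and then apply the Weil bound coming from the Riemann Hypothesis for $L$-functions over $\mathbb{F}_q[x]$. The first step is to recognize that $P \mapsto (P/f)$ is itself a Dirichlet character: by multiplicativity of the Jacobi symbol in its second argument together with the fact that each $(P/Q)$ depends only on $P \pmod{Q}$, the map $P \mapsto (P/f)$ is a character modulo $\operatorname{rad}(f)$. Writing $f = h m^2$ with $h$ squarefree, this character agrees with $\chi_h(\cdot) := \prod_{Q \mid h}(\cdot / Q)$ on arguments coprime to $m$, and $\chi_h$ is non-trivial precisely because $f$ is not a square (so $h \neq 1$).

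Second, I would handle the discrepancy between $\sum_P (P/f)$ and $\sum_P \chi_h(P)$, which comes entirely from the primes $P \in \mathcal{P}_{2g+1}$ dividing $m$. There are at most $\deg(m)/(2g+1) \ll \deg(f)/(2g+1)$ such primes, each contributing $O(1)$, so their total contribution is absorbed in the final bound.

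Third, I would invoke the Weil bound for $\mathcal{L}(u, \chi_h)$. This $L$-function is a polynomial in $u$ of degree at most $\deg(h) - 1 \leq \deg(f)$, and all its inverse roots $\alpha_j$ satisfy $|\alpha_j| \leq \sqrt q$. The explicit formula
$$\sum_{f' \in \mathcal{M}_n} \Lambda(f') \chi_h(f') = -\sum_j \alpha_j^n$$
then yields $\bigl|\sum_{f' \in \mathcal{M}_n} \Lambda(f') \chi_h(f')\bigr| \leq \deg(f) \cdot q^{n/2}$; separating out the proper prime power contribution (of size $O(q^{n/2}/n)$) gives $\sum_{P \in \mathcal{P}_n} \chi_h(P) \ll \deg(f) \cdot q^{n/2}/n$. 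Taking $n = 2g+1$ and dividing by $|\mathcal{P}_{2g+1}| \sim q^{2g+1}/(2g+1)$ from the Prime Polynomial Theorem \eqref{ppt} produces a bound of order $q^{-g-1/2}\deg(f)$, which is in fact slightly sharper than the claim. The main technical point is the reduction in step one from $f$ to its squarefree part $h$; no genuine obstacle arises, as the whole argument is an essentially standard application of function-field RH to a non-trivial quadratic character.
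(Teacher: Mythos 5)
Your proof is correct and in substance coincides with the paper's, which simply cites Rudnick's equation (2.5): that cited bound is proved exactly by your route, namely viewing $P\mapsto\chi_P(f)$ as a non-principal quadratic character modulo the odd-exponent part of $f$ and applying the explicit formula together with the Weil bound $|\alpha_j|\leq\sqrt{q}$, which after dividing by $|\mathcal{P}_{2g+1}|$ even gives the slightly stronger $\ll q^{-g-1/2}\deg(f)$. No gaps; the handling of the primes dividing the square part and of the proper prime powers is exactly as it should be.
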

  \begin{proof}
 See \cite{R}, equation $(2.5)$. 
 \end{proof}

   \section{Upper bounds}
In this section we will prove the following upper bounds for moments, whose proofs are similar to the proof of the upper bound for moments of the Riemann zeta-function in \cite{soundupperbounds}. A similar function field proof can also be found in \cite{AF4th}.
 \begin{proposition}\label{upperbounds}
 Let $u=e^{i \theta}$ with $\theta \in [0,2\pi)$. Then for every $k>0$ and $\epsilon>0$, we have
 $$ \frac{1}{|\mathcal{P}_{2g+1}|}\sum_{P \in \mathcal{P}_{2g+1}} \Big|\mathcal{L} \Big(\frac{u}{\sqrt{q}},\chi_P \Big) \Big|^k \ll_\varepsilon g^{\varepsilon} \exp \Big( k \mathcal{M}_1(u,g)+\frac{k^2}{2}\mathcal{V}_1(u,g) \Big),$$ where
 $$\mathcal{M}_1(u,g) = \frac{1}{2} \log\min \Big\{g,\frac{1}{ \overline{2\theta}}\Big\}$$ and
 $$\mathcal{V}_1(u,g) = \mathcal{M}_1(u,g)+\frac{\log g}{2}.$$
 Here for $\theta \in [0, 2\pi)$ we denote $\overline{\theta} = \min \{ \theta, 2 \pi - \theta \}$.
 \end{proposition}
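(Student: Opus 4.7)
The plan is to adapt Soundararajan's upper bound method \cite{soundupperbounds} to the function field setting, along the lines of \cite{AF4th}. The starting point is the Weil bound, which places all zeros of $\mathcal{L}(\cdot,\chi_P)$ on the circle $|u|=1/\sqrt{q}$. A Jensen-type inequality on a small disk around $u/\sqrt{q}$ then yields, for an integer parameter $x$ with $1\leq x\leq 2g$,
\[
\log\!\left|\mathcal{L}\!\left(\tfrac{u}{\sqrt{q}},\chi_P\right)\right|\leq \mathrm{Re}\sum_{\deg Q\leq x}\frac{\chi_P(Q)\,u^{\deg Q}}{|Q|^{1/2}}\,w(\deg Q,x)+\tfrac{1}{2}\sum_{\deg Q\leq x/2}\frac{u^{2\deg Q}}{|Q|}\,\widetilde{w}(\deg Q,x)+O\!\left(\tfrac{g}{x}\right),
\]
with bounded smooth weights $w,\widetilde{w}$ coming from the Jensen step. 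The first sum is a short random Dirichlet polynomial in $\chi_P$, which I will call $A_P(u)$, while the second sum arises from squared characters (using $\chi_P(Q)^2=1$ for $Q\neq P$) and is deterministic.

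I would choose $x=g/(\log g)^2$, so the Soundararajan error $g/x$ is logarithmic and absorbed into the final $g^{\varepsilon}$ factor. By the Prime Polynomial Theorem, the deterministic sum equals $\tfrac12\sum_{n\leq x/2}u^{2n}/n+O(1)$, whose real part is bounded by $\tfrac12\log\min\{x,1/\overline{2\theta}\}+O(1)\leq\mathcal{M}_1(u,g)+O(1)$. Exponentiating therefore gives
\[
\Big|\mathcal{L}\!\left(\tfrac{u}{\sqrt{q}},\chi_P\right)\Big|^k\ll g^{\varepsilon}\,e^{k\mathcal{M}_1(u,g)}\exp\!\left(k\,\mathrm{Re}\,A_P(u)\right),
\]
reducing everything to a bound on the exponential moment $\mathbb{E}_P\!\left[\exp(k\,\mathrm{Re}\,A_P)\right]$.

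For this moment I would apply the Soundararajan--Harper device: split $A_P$ into dyadic pieces $A_P^{(j)}$ supported on primes with $\deg Q\in I_j:=(2^{j-1},2^j]$, and approximate each $\exp(k\,\mathrm{Re}\,A_P^{(j)})$ by its Taylor polynomial of length $N_j$. The $N_j$ are chosen large enough that the Taylor tail is negligible, but small enough that after expanding every $(A_P^{(j)})^{n_j}$ into a multi-sum over tuples of primes $(Q_1,\dots,Q_r)$, the aggregate satisfies $\sum_i\deg Q_i\leq 2g(1-\varepsilon)$. Under this constraint, Lemma \ref{PVlemma} shows that non-square tuples contribute a combined error of size $O(q^{-\varepsilon g})$, while square tuples reduce to Gaussian-moment combinatorics giving $\mathbb{E}[(\mathrm{Re}\,A_P^{(j)})^{2m}]\approx\tfrac{(2m)!}{m!\,2^m}V_j^m$, where $V_j:=\mathrm{Var}(\mathrm{Re}\,A_P^{(j)})$. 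Summing over $m$ and multiplying across the disjoint dyadic blocks recovers $\exp\bigl(\tfrac{k^2}{2}\,\mathrm{Var}(\mathrm{Re}\,A_P)\bigr)$.

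A final direct computation via the Prime Polynomial Theorem gives
\[
\mathrm{Var}(\mathrm{Re}\,A_P)=\tfrac12\sum_{\deg Q\leq x}\frac{w(\deg Q,x)^2}{|Q|}+\tfrac12\,\mathrm{Re}\sum_{\deg Q\leq x}\frac{u^{2\deg Q}\,w(\deg Q,x)^2}{|Q|}\leq\tfrac{\log g}{2}+\mathcal{M}_1(u,g)+O(1)=\mathcal{V}_1(u,g)+O(1),
\]
the second sum being controlled exactly as the deterministic sum in the second paragraph. Combining with the $e^{k\mathcal{M}_1}$ factor extracted earlier yields the claimed bound. The main technical obstacle is the simultaneous balancing of the Soundararajan cutoff $x$, the dyadic scales $I_j$, and the Taylor truncation lengths $N_j$, so that the combined Dirichlet polynomial length stays below $q^{2g(1-\varepsilon)}$ (so Lemma \ref{PVlemma} gives a genuine saving) while the Soundararajan error $g/x$ stays logarithmic; for large $k$ one may additionally need a Hölder step to reduce to even integer moments before applying the Taylor approximation.
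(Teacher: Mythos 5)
Your starting point coincides with the paper's: both rest on the Soundararajan-type pointwise bound $\log|\mathcal{L}(u/\sqrt{q},\chi_P)|\leq \frac{1}{h}\mathrm{Re}\sum_{\deg(Q^j)\leq h}(\cdots)+O(g/h)$ coming from the Riemann Hypothesis over function fields, on extracting $\mathcal{M}_1(u,g)$ from the deterministic $j=2$ (prime-square) terms via the sum $F(h,\theta)=\sum_{n\leq h}\cos(2n\theta)/(ne^{n/h})$, and on the Weil bound (Lemma \ref{PVlemma}) as the only orthogonality input. Where you diverge is the conversion of the remaining prime sum into a moment bound: the paper follows Soundararajan's original route, bounding the distribution function $N(V,u)$ of large values (splitting the prime sum into a long piece $S_1$ and a short piece $S_2$, applying Markov's inequality together with the moment estimate of Lemma \ref{dir_mom}, with the cutoff $h$ chosen as a function of $V$), and then integrating $k\int e^{kV+k\mathcal{M}_1}N(V,u)\,dV$; you instead propose Harper's exponential-moment device with dyadic blocks and truncated Taylor expansions. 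That route is viable in this family and would in principle even remove the $g^{\varepsilon}$, but as written it has two genuine gaps.

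First, the parameter choice $x=g/(\log g)^2$ does not work: the error $O(g/x)=O((\log g)^2)$ sits \emph{inside} the exponential, and $\exp\big(k(\log g)^2\big)$ is not $O_\varepsilon(g^{\varepsilon})$. You need $g/x=O(\log\log g)$ (or at least $o(\log g)$), i.e.\ $x$ of size nearly $g$, which is exactly why the paper lets $h$ vary with $V$ and why, in a Harper-style argument, the top dyadic blocks can only be given very short Taylor truncations $N_j$ under the total-degree budget $\sum_j N_j2^j\leq 2g(1-\varepsilon)$ needed for Lemma \ref{PVlemma} to save. Second, and more seriously, the assertion that ``the $N_j$ are chosen large enough that the Taylor tail is negligible'' is not justified and cannot be made uniform in $P$: the truncated Taylor polynomial approximates $\exp(k\,\mathrm{Re}\,A_P^{(j)})$ only on the event $|A_P^{(j)}|\ll N_j$, and with the short truncations forced on the upper blocks there are conductors $P$ violating this. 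Handling the exceptional set is the crux of both methods --- in the paper it is the $S_1/S_2$ split and the three ranges of $V$ in Lemma \ref{ub_n}; in Harper's method it is the decomposition according to the first ``large'' block, whose contribution is bounded separately using moments of that block. Your sketch acknowledges a balancing issue but omits this exceptional-set argument entirely, and without it the central step fails. Your variance computation and the extraction of $\mathcal{M}_1$ are correct (the $O(\log\log g)$ shifts from replacing $g$ by $x$ are harmlessly absorbed in $g^{\varepsilon}$), so the proposal is repairable, but as it stands the key analytic step is missing.
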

 
  \begin{proposition}\label{upperbounds1}
 Let $u=e^{i \theta}$ with $\theta \in [0,2\pi)$ and let $m = \deg \big(\mathcal{L}(E \otimes \chi_{P},v)\big)$. Then for every $k>0$ we have
 $$ \frac{1}{|\mathcal{P}_{2g+1}|}\sum_{P \in \mathcal{P}_{2g+1}} \Big|\mathcal{L} \Big(E\otimes \chi_P,\frac{u}{\sqrt{q}}\Big) \Big|^k \ll_\varepsilon g^{\varepsilon} \exp \Big( k \mathcal{M}_2(u,m)+\frac{k^2}{2}\mathcal{V}_2(u,m) \Big),$$ where
 $$\mathcal{M}_2(u,m) = -\frac{1}{2} \log \min  \Big \{ m ,\frac{1}{ \overline{2\theta}} \Big\}$$ and
 $$\mathcal{V}_2(u,m) = -\mathcal{M}_2(u,m)+\frac{\log m}{2}.$$
 \end{proposition}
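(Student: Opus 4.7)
The plan is to follow Soundararajan's upper-bound technique \cite{soundupperbounds} in the form already used in \cite{AF4th} and in the proof of Proposition \ref{upperbounds}. The structural difference from the symplectic case of Proposition \ref{upperbounds} is that at a good prime $Q$, the local factor of $\mathcal{L}(E\otimes\chi_P,u/\sqrt q)$ is quadratic in $u^{\deg Q}/\sqrt{|Q|}$ rather than linear, and this flips the sign of the mean contribution at prime squares, turning the family into an orthogonal one with $\mathcal{M}_2(u,m) < 0$.

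First I would establish a Soundararajan-type majorisation. Since $\mathcal{L}(E\otimes\chi_P,u/\sqrt q)$ is a polynomial of degree $m$ whose inverse roots lie on the unit circle by the Weil bound, a contour manipulation as in \cite{AF4th} yields, for any integer $N \geq 1$,
$$\log\bigl|\mathcal{L}(E\otimes\chi_P,u/\sqrt q)\bigr| \leq \Re\sum_{\deg R \leq N}\frac{a(R)\chi_P(R)u^{\deg R}}{\sqrt{|R|}}\,w\Bigl(\frac{\deg R}{N}\Bigr) + O\Bigl(\frac{m}{N}\Bigr),$$
where the sum runs over prime powers $R=Q^k$, the coefficients are read off from the logarithmic Euler product (so $a(Q) = \lambda(Q)$ and $a(Q^2) = \tfrac12\lambda(Q)^2 - 1$ at good primes), and $w$ is a smooth cut-off. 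Raising to the $k$-th power and splitting $\mathcal{D}_N = \mathcal{D}_N^{(1)} + \mathcal{D}_N^{(2)}$ into prime and prime-square contributions gives
$$\bigl|\mathcal{L}(E\otimes\chi_P,u/\sqrt q)\bigr|^k \leq e^{O(km/N)}\exp\bigl(k\Re\mathcal{D}_N^{(1)}(P,u) + k\Re\mathcal{D}_N^{(2)}(P,u)\bigr).$$

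The prime-square part is essentially deterministic, since $\chi_P(Q)^2 = 1$ for $P\neq Q$, so
$$\Re\mathcal{D}_N^{(2)}(P,u) = \sum_{\deg Q \leq N/2}\frac{(\tfrac12\lambda(Q)^2 - 1)\cos(2\theta\deg Q)}{|Q|} + O(1).$$
Using the Rankin--Selberg identity $\sum_Q \lambda(Q)^2 |Q|^{-s} = \log L(\mathrm{Sym}^2 E,s) + \log\zeta_q(s) + O(1)$ together with the Prime Polynomial Theorem, this evaluates to $\mathcal{M}_2(u,m) + O(1)$; the crucial minus sign in $\mathcal{M}_2$ traces back to the $-1$ in $\tfrac12\lambda(Q)^2 - 1$, which itself comes from the $+u^{2\deg Q}/q$ monomial in the good elliptic-curve Euler factor (absent in the quadratic Dirichlet case). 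For the prime contribution, I would expand $\exp(k\Re\mathcal{D}_N^{(1)})$ as a truncated Taylor polynomial of degree $2\ell$ and average over $P$ term by term. Each monomial that is not a perfect square contributes $O(q^{-g}\cdot(\text{degree}))$ by Lemma \ref{PVlemma}, so choosing $\ell N \leq cg$ makes the total off-diagonal error negligible. The diagonal is a Gaussian-type moment bounded by $(2\ell)!/(2^\ell \ell!)\,\mathcal{V}_2(u,m)^\ell$, with
$$\mathcal{V}_2(u,m) \approx \frac12\sum_{\deg Q \leq N}\frac{\lambda(Q)^2(1+\cos(2\theta\deg Q))}{|Q|} \approx \frac{\log m}{2} - \mathcal{M}_2(u,m),$$
again by Rankin--Selberg and the Prime Polynomial Theorem.

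Summing the truncated Taylor series and choosing $N\asymp m$ so that $km/N = O(k)$ absorbs the tail error into the $g^\varepsilon$ prefactor, yielding the stated bound. The main technical obstacle is the off-diagonal step: one must sum the Weil-type error from Lemma \ref{PVlemma} against the multinomial coefficients $\prod_i a(R_i)$ arising in the Taylor expansion, where $R_1,\dots,R_{2\ell}$ are prime powers whose product is not a perfect square. The crude bound $|\lambda(Q)|\leq 2$ (from Weil/Deligne for the elliptic curve) is what makes this converge after the full combinatorial bookkeeping, which is identical to the argument in \cite{soundupperbounds, AF4th}; I would defer to those references for the routine details.
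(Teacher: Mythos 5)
Your outline diverges from the paper (which follows Soundararajan's distribution-function method: Proposition 4.3 of \cite{BFKR} to majorise $\log|\mathcal{L}|$, then a large-deviation bound for $N(V,u)$ via Markov's inequality and Lemma \ref{dir_mom}, then $k\int e^{kV+k\mathcal{M}_2}N(V,u)\,dV$), and the route you propose has a genuine gap at the moment-computation step. You want to exponentiate the majorant directly and replace $\exp\bigl(k\Re\mathcal{D}_N^{(1)}\bigr)$ by its degree-$2\ell$ Taylor truncation, averaged term by term. First, $e^x\le\sum_{j\le 2\ell}x^j/j!$ is simply false for $x$ large and positive, so this substitution is not an upper bound unless you separately handle the event that $k\Re\mathcal{D}_N^{(1)}$ exceeds roughly $\ell$; you never address this rare-event contribution. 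Second, and more seriously, your two parameter choices are incompatible: you take $N\asymp m\asymp g$ so that the $O(km/N)$ error is $O(k)$, but the off-diagonal step via Lemma \ref{PVlemma} (i.e.\ Lemma \ref{dir_mom}) forces $\ell N\ll g$, hence $\ell=O(1)$. A bounded number of moments of a Dirichlet polynomial whose typical size is $\sqrt{\mathcal{V}_2}\asymp\sqrt{\log g}$ cannot recover the factor $\exp\bigl(k^2\mathcal{V}_2/2\bigr)$, which is a power of $g$; conversely, shrinking $N$ to allow $\ell\gg\log g$ makes $e^{O(km/N)}$ a positive power of $g$ and destroys the bound. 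This tension is exactly what the paper's argument is built to resolve: the length $h$ of the majorising polynomial is chosen depending on the large-deviation level $V$ (via $m/h=V/A$), the polynomial is split at $h_0=h/\log m$ so that the short piece $S_1$ admits $\ell\le g/h_0\approx(\log m)\,g/h$ moments while the long piece $S_2$ only needs to exceed the large threshold $V/A$ and so is controlled with few moments. An alternative correct implementation of your ``exponentiate directly'' strategy would require a Harper-style decomposition of the Dirichlet polynomial into many segments with separate treatment of the event that any segment is large; either way, this splitting is a missing idea, not a routine detail to be deferred to \cite{soundupperbounds, AF4th}.

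Your treatment of the prime-square terms is in the right spirit (the sign flip producing $\mathcal{M}_2<0$ comes from $\lambda(Q^2)=\lambda(Q)^2-1$ at good primes, matching the paper's computation leading to \eqref{aux}, with an $O(\log\log m)$ rather than $O(1)$ error), and the identification of $\mathcal{V}_2$ via Rankin--Selberg is consistent with \eqref{ub_pf}. But as written, the proof of the displayed moment bound does not go through.
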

 
Propositions \ref{upperbounds} and  \ref{upperbounds1} immediately lead to the following corollary.
\begin{corollary}\label{corollaryupper}
Let $u=e^{i \theta}$ with $\theta \in [0,2\pi)$. Then
 \begin{equation*}
 \frac{1}{|\mathcal{P}_{2g+1}|}  \sum_{P \in \mathcal{P}_{2g+1}} \Big|\mathcal{L} \Big(\frac{u}{\sqrt{q}},\chi_P \Big) \Big|^2 \ll_\varepsilon g^{1+\epsilon} \min \Big\{ g,\frac{1}{ \overline{2\theta}}\Big\}^{2}
 \end{equation*}
 and
 \begin{equation*}
 \frac{1}{|\mathcal{P}_{2g+1}|}  \sum_{P \in \mathcal{P}_{2g+1}} \Big|\mathcal{L} \Big(E\otimes \chi_P,\frac{u}{\sqrt{q}}\Big) \Big| \ll_\varepsilon g^{1/4+\epsilon} \min \Big\{ g,\frac{1}{  \overline{2\theta}}\Big\}^{-1/4}.
 \end{equation*}
 \end{corollary}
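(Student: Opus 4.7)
The corollary is stated as an immediate consequence of Propositions \ref{upperbounds} and \ref{upperbounds1}, so the plan is simply to specialize each proposition to the relevant value of $k$ and simplify the exponential.

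For the first bound, I would apply Proposition \ref{upperbounds} with $k=2$. Writing $\mu := \min\{g, 1/\overline{2\theta}\}$, the definitions give $\mathcal{M}_1(u,g) = \tfrac{1}{2}\log \mu$ and $\mathcal{V}_1(u,g) = \tfrac{1}{2}\log \mu + \tfrac{1}{2}\log g$, so
\[
2\mathcal{M}_1(u,g) + 2\mathcal{V}_1(u,g) = \log \mu + \log \mu + \log g = \log\bigl(g \mu^2\bigr).
\]
Exponentiating and absorbing the extra $g^\varepsilon$ from Proposition \ref{upperbounds} produces the stated bound $g^{1+\varepsilon}\min\{g, 1/\overline{2\theta}\}^{2}$.

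For the second bound I would apply Proposition \ref{upperbounds1} with $k=1$. Setting $\nu := \min\{m, 1/\overline{2\theta}\}$, the definitions give $\mathcal{M}_2(u,m) = -\tfrac{1}{2}\log \nu$ and $\mathcal{V}_2(u,m) = \tfrac{1}{2}\log \nu + \tfrac{1}{2}\log m$, so
\[
\mathcal{M}_2(u,m) + \tfrac{1}{2}\mathcal{V}_2(u,m) = -\tfrac{1}{2}\log \nu + \tfrac{1}{4}\log \nu + \tfrac{1}{4}\log m = \tfrac{1}{4}\log\bigl(m\,\nu^{-1}\bigr).
\]
Exponentiating gives $m^{1/4}\min\{m, 1/\overline{2\theta}\}^{-1/4}$. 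To finish I would invoke the degree formula $m = \mathfrak{n} + 2(2g+1)$, which shows $m \asymp g$ (for $g$ large compared to the fixed elliptic curve data), so that $m^{1/4} \ll g^{1/4}$ and $\min\{m,1/\overline{2\theta}\}^{-1/4} \ll \min\{g,1/\overline{2\theta}\}^{-1/4}$, yielding the claimed bound after again absorbing the $g^\varepsilon$ factor.

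There is no real obstacle: both parts reduce to algebraic manipulation of the logarithmic quantities $\mathcal{M}_i$ and $\mathcal{V}_i$ followed by exponentiation, and the only mildly nontrivial observation is the comparison between $m$ and $g$ in the elliptic-curve case, which is immediate from \eqref{degree} since $\mathfrak{n}$ is a fixed constant depending only on $E$.
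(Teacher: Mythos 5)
Your proposal is correct and is exactly the argument the paper intends: the paper gives no separate proof, stating only that the corollary follows immediately from Propositions \ref{upperbounds} and \ref{upperbounds1}, and your specializations $k=2$ and $k=1$ together with the observation $m=\mathfrak{n}+2(2g+1)\asymp g$ (so $m\geq g$ and the min with $m$ can be replaced by the min with $g$ in the negative power) are precisely the omitted computation.
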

 
Before proving the above propositions, we first need the following lemma.
 
 \begin{lemma}\label{dir_mom}
Let $h,l$ be integers such that $hl \leq g$ and $h>1$. For any complex numbers $a(Q)$ we have
$$\frac{1}{|\mathcal{P}_{2g+1}|} \sum_{P \in \mathcal{P}_{2g+1}} \bigg| \sum_{Q\in\mathcal{P}_{ \leq h}} \frac{ a(Q)\chi_P(Q)}{\sqrt{|Q|}} \bigg|^{2l} \ll \frac{ (2l)!}{l! 2^l} \bigg(\sum_{Q\in\mathcal{P}_{ \leq h}} \frac{ |a(Q)|^2}{|Q|} \bigg)^l.$$
\end{lemma}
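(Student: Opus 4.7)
The plan is to expand the $2l$-th moment as a multiple Dirichlet sum over primes $Q_1,\dots,Q_{2l}$, isolate the diagonal contribution where $Q_1\cdots Q_{2l}$ is a perfect square, and dispose of the off-diagonal using the Weil bound from Lemma~\ref{PVlemma}. Setting $X(P):=\sum_{Q\in\mathcal{P}_{\le h}} a(Q)\chi_P(Q)/\sqrt{|Q|}$ and using that $\chi_P$ is real-valued, so $|X(P)|^{2l}=X(P)^l\,\overline{X(P)}^l$, I would rewrite the average as
\[
\frac{1}{|\mathcal{P}_{2g+1}|}\sum_{P\in\mathcal{P}_{2g+1}}|X(P)|^{2l} = \sum_{Q_1,\dots,Q_{2l}\in\mathcal{P}_{\le h}}\frac{a(Q_1)\cdots a(Q_l)\,\overline{a(Q_{l+1})\cdots a(Q_{2l})}}{\sqrt{|Q_1\cdots Q_{2l}|}}\cdot\frac{1}{|\mathcal{P}_{2g+1}|}\sum_{P}\chi_P(Q_1\cdots Q_{2l}).
\]

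For the diagonal, I note that $\deg Q_i\le h\le g<2g+1=\deg P$ and $P$ is irreducible, so $P\nmid Q_i$ for every $i$, and hence $\chi_P(Q_1\cdots Q_{2l})=1$ whenever the product is a perfect square. Passing to absolute values, I would bound the diagonal by a sum over perfect matchings of the index set: for each of the $(2l)!/(l!\,2^l)$ matchings $\pi$ of $\{1,\dots,2l\}$ into $l$ pairs, imposing $Q_i=Q_j$ whenever $\{i,j\}\in\pi$ yields exactly $\bigl(\sum_Q|a(Q)|^2/|Q|\bigr)^l$, and every square-product tuple is counted by at least one such matching (since each prime appears with even total multiplicity). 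This produces the claimed combinatorial upper bound $\frac{(2l)!}{l!\,2^l}\bigl(\sum_Q|a(Q)|^2/|Q|\bigr)^l$ for the diagonal.

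For the off-diagonal, Lemma~\ref{PVlemma} applied with $f=Q_1\cdots Q_{2l}$ (which is not a square) saves a factor $q^{-g}\deg(Q_1\cdots Q_{2l})\le 2lh\,q^{-g}$, after which the remaining multi-sum factorises into $\bigl(\sum_{Q\in\mathcal{P}_{\le h}}|a(Q)|/\sqrt{|Q|}\bigr)^{2l}$. Cauchy--Schwarz together with the Prime Polynomial Theorem~\eqref{ppt} bounds this by $O\bigl((q^h/h)^l\bigl(\sum_Q|a(Q)|^2/|Q|\bigr)^l\bigr)$, so assembling the pieces gives an off-diagonal contribution of size $O\bigl(l\,h^{1-l}\,q^{lh-g}\bigl(\sum_Q|a(Q)|^2/|Q|\bigr)^l\bigr)$. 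The hypothesis $lh\le g$ makes $q^{lh-g}\le 1$, and the hypothesis $h>1$ keeps the factor $h^{1-l}$ bounded, so this contribution is absorbed into the $\ll$ constant in the stated inequality. The main obstacle is precisely this balance between the support size $q^{lh}$ of the Dirichlet polynomial and the Weil savings $q^{-g}$: these match exactly at the boundary $lh=g$, which is why the hypothesis is sharp; any weakening would let the off-diagonal overtake the $(2l-1)!!$-type growth of the diagonal.
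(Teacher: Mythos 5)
Your proposal is correct and follows essentially the same route as the paper: expand the $2l$-th power, bound the square (diagonal) tuples by the $(2l)!/(l!\,2^l)$ perfect matchings each contributing $\bigl(\sum_Q |a(Q)|^2/|Q|\bigr)^l$, and control the non-square terms with Lemma \ref{PVlemma} plus Cauchy--Schwarz and the Prime Polynomial Theorem, using $hl\le g$ and $h>1$ to absorb the factor $q^{lh-g}\,l\,h^{1-l}$. Your bookkeeping of the off-diagonal (keeping $\deg(Q_1\cdots Q_{2l})\le 2lh$ explicitly) is, if anything, slightly cleaner than the paper's.
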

\begin{proof}
The proof is similar to the proof of Lemma 6.3 in \cite{soundyoung}. Expanding out and using Lemma \ref{PVlemma} we have
\begin{align}
&\frac{1}{|\mathcal{P}_{2g+1}|} \sum_{P \in \mathcal{P}_{2g+1}} \bigg| \sum_{Q\in\mathcal{P}_{ \leq h}} \frac{a(Q)\chi_P(Q) }{\sqrt{|Q|}} \bigg|^{2l} \nonumber \\
&\qquad\ll \sum_{\substack{Q_1,\ldots,Q_{2l}\in\mathcal{P}_{ \leq h}\\Q_1\ldots Q_{2l}=\square}} \frac{|a(Q_1)\ldots a(Q_{2l})|}{\sqrt{|Q_1\ldots Q_{2l}|}}+O\bigg(hlq^{-g}\sum_{Q_1,\ldots,Q_{2l}\in\mathcal{P}_{ \leq h}} \frac{|a(Q_1)\ldots a(Q_{2l})}{\sqrt{|Q_1\ldots Q_{2l}|}}\bigg). \label{pv}
\end{align}
For the first term, we note that $Q_1\ldots Q_{2l}=\square$ if and only if there is a way to pair up
the indices so that the corresponding polynomials are equal. As there are $(2l)!/l!2^l$ ways to pair up $2l$
indices, it follows that
\begin{align*}
\sum_{\substack{Q_1,\ldots,Q_{2l}\in\mathcal{P}_{ \leq h}\\Q_1\ldots Q_{2l}=\square}} \frac{|a(Q_1)\ldots a(Q_{2l})|}{\sqrt{|Q_1\ldots Q_{2l}|}}\ll \frac{(2l)!}{l!2^l} \bigg(\sum_{Q\in\mathcal{P}_{ \leq h}} \frac{ |a(Q)|^2}{|Q|} \bigg)^l.
\end{align*}
For the second term in \eqref{pv}, we use the Cauchy-Schwarz inequality to see that it is bounded by
\begin{align*}
&\ll gq^{-g}\bigg( \sum_{Q\in\mathcal{P}_{ \leq h}} \frac{a(Q)}{\sqrt{|Q|}} \bigg)^{2l} \ll gq^{-g}\bigg( \sum_{Q\in\mathcal{P}_{ \leq h}} \frac{|a(Q)|^2}{|Q|} \bigg)^{l}\bigg( \sum_{Q\in\mathcal{P}_{ \leq h}} 1 \bigg)^{l}.
\end{align*}
Since $h>1$ and $hl \leq g$, using the Prime Polynomial Theorem, it follows that the above is bounded by
$$\ll \bigg( \sum_{Q\in\mathcal{P}_{ \leq h}} \frac{|a(Q)|^2}{|Q|} \bigg)^{l},$$
and the proof is complete.

\end{proof}

We shall only illustrate the proof of Proposition \ref{upperbounds1}. The proof of Proposition \ref{upperbounds} follows along the same lines and it is also similar to the proof of Theorem $2.7$ in \cite{AF4th}, using Lemma \ref{dir_mom} instead of Lemma $8.4$ in \cite{AF4th}.

\begin{lemma} \label{ub_n}
Let $$N(V,u) =\frac{1}{|\mathcal{P}_{2g+1}|} \bigg| \Big\{P \in \mathcal{P}_{2g+1}: \log \Big| \mathcal{L}\Big(E \otimes \chi_{P}, \frac{u}{\sqrt{q}}\Big)\Big| \geq V + \mathcal{M}_2(u,m) \Big\} \bigg|.$$
If $\sqrt{\log m} \leq V \leq \mathcal{V}_2(u,m)$, then
$$N(V,u) \ll \exp \bigg( -\frac{V^2}{2 \mathcal{V}_2(u,m) } \Big(1-\frac{8}{\log \log m} \Big)\bigg) ; $$
if $ \mathcal{V}_2(u,m)< V \leq  \frac{\log \log m}{16}\mathcal{V}_2(u,m)$, then
$$ N(V,u) \ll \exp \bigg( -\frac{V^2}{2 \mathcal{V}_2(u,m) } \Big( 1- \frac{8V}{ \mathcal{V}_2(u,m) \log \log m} \Big)^2 \bigg);$$
and if $V > \frac{\log \log m}{16}\mathcal{V}_2(u,m)$, then
$$N(V,u) \ll \exp \Big(-\frac{V \log V}{4500} \Big).$$
\end{lemma}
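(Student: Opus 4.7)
The strategy is to follow the Soundararajan method \cite{soundupperbounds} for bounds on the distribution of large values of $\log|L|$, adapted to function fields as in \cite{AF4th}. The first key step is to establish a Soundararajan-type inequality: for any $2 \leq x \leq m$,
\begin{align*}
\log\Bigl|\mathcal{L}\Bigl(E\otimes\chi_P,\frac{u}{\sqrt{q}}\Bigr)\Bigr| &\leq \Re \sum_{\substack{Q\in\mathcal{P}\\ \deg(Q)\leq x}} \frac{\lambda(Q)\chi_P(Q)u^{\deg(Q)}}{|Q|^{1/2}}\Bigl(1-\frac{\deg(Q)}{x}\Bigr) + \mathcal{M}_2(u,m) + O\Bigl(\frac{m}{x}\Bigr).
\end{align*}
This is proved by Hadamard-factoring $\mathcal{L}(E\otimes\chi_P,\cdot)$ over its zeros, which lie on $|v|=1/\sqrt{q}$ by the Riemann Hypothesis in function fields, and inserting a Fejér-type truncation. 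The prime-square contribution $\sum_Q \lambda(Q^2)u^{2\deg(Q)}(1-\deg(Q)/x)/|Q|$ is evaluated deterministically and, by oscillation in $\theta$, produces the $-\tfrac12\log\min\{m,1/\overline{2\theta}\}$ shape of $\mathcal{M}_2(u,m)$.

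Setting $\mathcal{P}(x):=\sum_{Q\in\mathcal{P}_{\leq x}} \lambda(Q)\chi_P(Q)u^{\deg(Q)}(1-\deg(Q)/x)/|Q|^{1/2}$, the event defining $N(V,u)$ forces $\Re\mathcal{P}(x) \geq V - O(m/x)$. For any positive integer $l$ with $lx\leq g$, Markov's inequality and Lemma \ref{dir_mom} give
\[
N(V,u) \ll \frac{(2l)!}{l!\,2^l}\cdot\frac{V_x^l}{(V-O(m/x))^{2l}}, \qquad V_x := \sum_{Q\in\mathcal{P}_{\leq x}}\frac{|\lambda(Q)|^2(1-\deg(Q)/x)^2}{|Q|}.
\]
By Deligne's bound $|\lambda(Q)|\leq 2$ and the Prime Polynomial Theorem, $V_x \ll \log x$, and choosing $x$ so that $\log x$ is of the scale of $\mathcal{V}_2(u,m)$ gives $V_x \ll \mathcal{V}_2(u,m)$. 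Stirling's estimate $(2l)!/(l!2^l) \leq (2l/e)^l$ then yields
\[
N(V,u) \ll \exp\Bigl(l\log(2l\mathcal{V}_2/e) - 2l\log(V-O(m/x))\Bigr).
\]

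The three stated regimes correspond to three choices of $l$ and $x$ constrained by $lx \leq g$. When $\sqrt{\log m}\leq V \leq \mathcal{V}_2$, the natural choice $l = \lfloor V^2/(2\mathcal{V}_2)\rfloor$ with $x$ a large multiple of $\log m$ lies comfortably below the constraint, and optimisation gives the Gaussian bound, with the small loss from the $O(m/x)$ error and sub-leading Stirling terms absorbed into the factor $(1 - 8/\log\log m)$. For $\mathcal{V}_2 < V \leq (\log\log m/16)\,\mathcal{V}_2$ the optimal $l$ would violate $lx\leq g$, so $x$ is reduced and the loss is captured by $(1 - 8V/(\mathcal{V}_2\log\log m))^2$. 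For $V > (\log\log m/16)\,\mathcal{V}_2$ the constraint binds throughout, and taking $l$ as large as permitted yields the weaker bound $\exp(-V\log V/4500)$. The principal obstacle is the first step, namely proving the Soundararajan inequality with precisely $\mathcal{M}_2(u,m)$ as the mean; once this is in place the remainder is a routine case analysis in $V$, and the numerical constants (for instance $1/4500$ and $8$) are not optimised but simply convenient for bookkeeping across the three ranges.
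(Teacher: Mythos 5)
There is a genuine gap: you are missing the key device of the Soundararajan method that the paper actually uses, namely splitting the prime sum into a \emph{short} piece and a \emph{long tail} before applying Markov's inequality. Your single truncation parameter $x$ cannot work, essentially because of scales. In this setting $m=4g+O(1)$, so the error in the Soundararajan-type inequality is $O(m/x)$ with $m\asymp 4g$, while $V\leq \mathcal{V}_2(u,m)\asymp\log m\asymp \log g$. Your suggestion to take ``$x$ a large multiple of $\log m$'' makes $m/x\asymp g/\log g$, which dwarfs $V$ entirely. To make $O(m/x)$ negligible at the precision of the stated factor $(1-8/\log\log m)$ you need $x\gg m\log\log m/V$, but then the hypothesis $lx\leq g$ of Lemma \ref{dir_mom} caps the moment order at $l\ll V/(4\log\log m)$, whereas the Gaussian bound requires $l\approx V^2/(2\mathcal{V}_2(u,m))$, which is larger as soon as $V\gg \mathcal{V}_2(u,m)/\log\log m$ --- that is, throughout most of the first range and exactly where the bound is needed for Corollary \ref{corollaryupper}. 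So the step you call a ``routine case analysis'' does not go through with one Dirichlet polynomial.

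The paper resolves this by taking $h$ with $m/h=V/A$ and splitting the weighted prime sum as $S_1$ (over $\deg Q\leq h_0=h/\log m$) plus $S_2$ (over $h_0<\deg Q\leq h$), then bounding $\mathcal{F}_1=\{S_1\geq V(1-4/A)\}$ and $\mathcal{F}_2=\{S_2\geq V/A\}$ separately. The point is that for $S_1$ the constraint becomes $lh_0\leq g$, which admits $l$ up to roughly $(V/A)\log m$, comfortably above $V_1^2/2\mathcal{V}_2(u,m)$ (or $[10V]$ when $V>\mathcal{V}_2(u,m)^2$), giving the Gaussian decay; while $S_2$ has variance only $O(\log\log m)$ and a low threshold $V/A$, so the small admissible $l=[g/h]$ already yields $\exp(-V\log V/8A)$, with $A$ chosen differently in the three regimes to produce the three stated bounds. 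A secondary inaccuracy: the mean $\mathcal{M}_2(u,m)$ does not come from ``oscillation in $\theta$'' of the $\lambda(Q^2)$ terms; in the paper the $j=2$ contribution is split as $\lambda(Q^2)-1$, the $\lambda(Q^2)$ part being only $O(\log\log m)$ by \eqref{ub_pf}, and it is the $-1$ part, evaluated via $F(h,\theta)=\log\min\{h,1/\overline{2\theta}\}+O(1)$, that produces $\mathcal{M}_2(u,m)+O(m/h)$. Your starting inequality itself is fine (it is Proposition 4.3 of \cite{BFKR}), but without the $S_1$/$S_2$ decomposition the claimed bounds cannot be reached.
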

\begin{proof}
Let 
$$ A= 
\begin{cases}
\displaystyle \frac{\log \log m}{2} & \mbox{ if } \sqrt{\log m} \leq V \leq \mathcal{V}_2(u,m), \\
\displaystyle \frac{\log \log m}{2V}\mathcal{V}_2(u,m)  & \mbox{ if } \mathcal{V}_2(u,m) < V \leq \frac{ \log \log m}{16}\mathcal{V}_2(u,m), \\
 8 & \mbox{ if } V> \frac{ \log \log m}{16}\mathcal{V}_2(u,m),
\end{cases},
$$
and
$$\frac{m}{h}=\frac{V}{A}.$$ Proposition 4.3 in \cite{BFKR} yields
\begin{align}\label{ineq_prop}
& \log \bigg| \mathcal{L}\Big(E \otimes \chi_{P},  \frac{u}{\sqrt{q}}\Big) \bigg| \leq \frac{m}{h}+\frac{1}{h} \Re \bigg(  \sum_{\substack{ j \geq 1 \\ \deg(Q^j) \leq h}} \frac{ ( \alpha(Q)^j + \beta(Q)^j)\chi_{P}(Q^j)\log q^{h-j \deg(Q)}}{|Q|^{(1/2+(1/h-i\theta)/ \log q)j}\log q^j} \bigg).
\end{align}

The contribution of the terms with $j \geq 3$ is bounded by $O(1)$. The terms with $j=2$ will contribute
\begin{align*}
\frac{1}{2h } \sum_{\deg(Q) \leq h/2} \frac{ (h - 2\deg(Q))(\lambda (Q^2)-1)\cos(2 \theta \deg(Q)) }{|Q|e^{2\deg(Q)/h}}.
\end{align*}
Let 
$$ F(h, \theta) = \sum_{n=1}^h \frac{ \cos(2n \theta)}{n e^{n/h}}.$$
As in Lemma 9.1 in \cite{AF4th}, we can show that
\begin{equation*}
F (h, \theta) =  \log \min  \Big \{ h ,\frac{1}{ \overline{2 \theta}} \Big\} +O(1).
\end{equation*}
Since
\begin{equation}
 \sum_{\deg(Q) \leq h} \frac{ \lambda(Q^2) \cos(2 \theta \deg(Q))}{|Q|e^{\deg(Q)/h}} = O(\log \log h),
 \label{ub_pf}
 \end{equation}
it follows that the contribution from $j=2$ is
\begin{align}
-\frac{1}{2 h} & \sum_{\deg(Q) \leq \frac{h}{2}} \frac{(h - 2 \deg(Q))\cos(2 \theta \deg(Q))}{|Q|e^{2\deg(Q)/h}}+O( \log \log m) \nonumber \\
&=-\frac{F(h/2,\theta)}{2} + O(\log \log m)\leq  -\frac{F(m, \theta)}{2} +\frac{m}{h}+ O(\log \log m) \nonumber\\
&= \mathcal{M}_2(u,m) + \frac{m}{h} + O( \log \log m). \label{aux}
\end{align}
Note that in the second line of the equation above we used the fact that
\begin{align*}
F(m,\theta) - F\Big(\frac h2,\theta\Big) &= \sum_{n=1}^m \frac{ \cos(2n \theta)}{n e^{n/m}}- \sum_{n=1}^{h/2} \frac{\cos(2n \theta)}{ne^{2n/h}},
\end{align*} and since $e^{-x} = 1+O(x)$, we have
\begin{align*}
F(m,\theta) - F\Big(\frac h2,\theta\Big)= \sum_{n=h/2+1}^m \frac{\cos(2n \theta)}{n} +O(1) \leq \frac{2m}{h} + O(1).
\end{align*}
Applying equation \eqref{aux} to \eqref{ineq_prop} hence leads to
\begin{align*}
 \log \bigg| \mathcal{L}\Big(E \otimes \chi_{P},  \frac{u}{\sqrt{q}}\Big)\bigg| &\leq \mathcal{M}_2(u,m) +\frac{3m}{h}\\
 &\qquad\qquad+ \frac{1}{h} \sum_{\deg(Q) \leq h} \frac{  (h - \deg(Q))\lambda(Q)\chi_{P}(Q)\cos( \theta \deg(Q)) }{ \sqrt{|Q|}e^{\deg(Q)/h}} .
\end{align*}

Let $S_1$ be the sum above truncated at $$\deg(Q) \leq h_0=\frac{h}{\log m}$$ and $S_2$ be the sum over primes with $h_0 < \deg(Q) \leq h$. If $P$ is such that $$ \log \bigg| \mathcal{L}\Big(E \otimes \chi_{P},  \frac{u}{\sqrt{q}}\Big)\bigg| \geq V+\mathcal{M}_2(u,m),$$ then $$S_1 \geq V_1:= V \Big( 1-\frac{4}{A}\Big) \qquad\text{or}\qquad S_2 \geq \frac{V}{A}.$$ 
Let $$\mathcal{F}_1 = \{P \in \mathcal{P}_{2g+1} : S_1 \geq V_1\}\qquad \text{and} \qquad\mathcal{F}_2 = \{ P \in \mathcal{P}_{2g+1} : S_2 \geq V/A \}.$$

If $P \in \mathcal{F}_2$, then by Markov's inequality and Lemma \ref{dir_mom} it follows that
\begin{align*} |\mathcal{F}_2|&\ll\Big(\frac{A}{V}\Big)^{2l}\sum_{P \in \mathcal{P}_{2g+1}} \bigg(\sum_{h_0<\deg(Q) \leq h} \frac{ a(Q)\chi_P(Q)}{\sqrt{|Q}|}\bigg)^{2l}\\
& \ll |\mathcal{P}_{2g+1}| \Big( \frac{A}{V}\Big)^{2l}  \frac{ (2l)!}{l! 2^l}\Big( \sum_{h_0<\deg(Q) \leq h} \frac{ |a(Q)|^2}{|Q|} \Big)^l,\end{align*} 
for any $l \leq g/h$ where $$ a(Q) =\frac{  (h- \deg(Q)) \lambda(Q)\cos(\theta \deg(Q))}{h e^{\deg(Q)/h}}.$$
We pick $l = [g/h]$ and note that $a(P) \ll 1$ and $m=4g+O(1)$. Hence we get that
\begin{equation}
|  \mathcal{F}_2 | \ll |\mathcal{P}_{2g+1}| \Big( \frac{A}{V} \Big)^{2l} \Big(\frac{2l}{e} \Big)^{l} (\log \log m)^l \ll |\mathcal{P}_{2g+1}| \exp \Big(-\frac{V\log V}{8A}  \Big).
\label{f2_bd}
\end{equation}

If $P \in \mathcal{F}_1$ then similarly for any $l \leq g/h_0$, we have
$$ | \mathcal{F}_1| \ll |\mathcal{P}_{2g+1}|\frac{1}{V_1^{2l}} \frac{ (2l)!}{l! 2^l} \Big( \sum_{\deg(Q) \leq h_0} \frac{ |a(Q)|^2}{|Q|} \Big)^l.$$
Using the expression for $a(Q)$ and equation \eqref{ub_pf} we obtain 
$$  | \mathcal{F}_1| \ll |\mathcal{P}_{2g+1}| \Big(\frac{2l}{eV_1^2} \Big)^{l} \big( \mathcal{V}_2(u,m) + O(\log \log m) \big)^l.$$
 If $V \leq \mathcal{V}_2(u,m)^2$, 
  then we pick $l = [ V_1^2/2 \mathcal{V}_2(u,m)]$, and if $V > \mathcal{V}_2(u,m)^2$, 
   then we pick $l=[10V]$. In doing so we get 
 \begin{equation}
 | \mathcal{F}_1| \ll |\mathcal{P}_{2g+1}| \exp \Big( - \frac{V_1^2}{2 \mathcal{V}_2(u,m)} \Big)+ |\mathcal{P}_{2g+1}| \exp ( -V \log V).
 \label{f1_bd}
 \end{equation}
 Combining the bounds \eqref{f1_bd} and \eqref{f2_bd} finishes the proof of Lemma \ref{ub_n}.
\end{proof}

\begin{proof}[Proof of Theorem \ref{upperbounds1}]
We have the following.
\begin{align*}
 \frac{1}{|\mathcal{P}_{2g+1}|}\sum_{P \in \mathcal{P}_{2g+1}} \Big|\mathcal{L} \Big(E\otimes \chi_P,\frac{u}{\sqrt{q}}\Big) \Big|^k& = - \int_{-\infty}^{\infty} \exp \big(kV + k \mathcal{M}_2(u,m)\big) dN(V,u) \\
&= k \int_{-\infty}^{\infty} \exp \big(kV+k \mathcal{M}_2(u,m)\big) N(V,u) dV.
\end{align*}
We apply Lemma \ref{ub_n} in the form
$$N(V,u) \ll_\varepsilon 
\begin{cases}
m^{\varepsilon} \exp \Big(-\frac{V^2}{2 \mathcal{V}_2(u,m)} \Big) & \mbox{ if } V \leq 8k \mathcal{V}_2(u,m), \\
m^{\varepsilon} \exp(-4kV) & \mbox{ if }V > 8k \mathcal{V}_2(u,m)
\end{cases}
$$
to the above formula and finish the proof of the theorem.
\end{proof}

 \section{Main propositions}
 
 \begin{proposition}\label{mainprop}
For $X< g$, let
 \[
E_1(X)=\frac{1}{|\mathcal{P}_{2g+1}|} \sum_{P \in \mathcal{P}_{2g+1}} \sum_{\substack{2 X<\deg(f)\leq 2g}} \frac{\tau(f)\chi_P(f) }{\sqrt{|f|}}. \]
 Then
 \begin{align*}
 E_1(X)&=\frac{g^2(g-X)}{2\zeta_q(2)}+ O_\varepsilon \big( g^{3/2+\varepsilon}(g-X)\big)+ O \big( g^{1/2}(g-X)^3\big).
 \end{align*}
  \end{proposition}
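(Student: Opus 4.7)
The plan is to decompose $E_1(X) = D(X) + N(X)$ according to whether $f$ is a square ($f = l^2$) or not. Because $\deg(l) \leq g < \deg(P)$ forces $(P,l) = 1$, we have $\chi_P(l^2) = 1$, and the diagonal
\[
D(X) = \sum_{X < \deg(l) \leq g} \frac{\tau(l^2)}{|l|}
\]
is independent of $P$. Using the Euler-product identity $\sum_{l \in \mathcal{M}} \frac{\tau(l^2)}{|l|} u^{\deg(l)} = \frac{1-u^2/q}{(1-u)^3}$, I would extract $[u^n] = \frac{(n+2)(n+1)}{2} - \frac{n(n-1)}{2q} = \frac{n^2}{2\zeta_q(2)} + O(n)$ and sum to obtain $D(X) = \frac{g^3-X^3}{6\zeta_q(2)} + O(g(g-X))$. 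Writing $g^3 - X^3 = (g-X)\bigl(3g^2 - 3g(g-X) + (g-X)^2\bigr)$ isolates the main term $\frac{g^2(g-X)}{2\zeta_q(2)}$ with remainder $O(g(g-X)^2) + O((g-X)^3) + O(g(g-X))$; each piece is absorbed by the two stated error terms, using $g(g-X)^2 \leq g^{3/2}(g-X)$ when $g-X \leq g^{1/2}$ and $g(g-X)^2 \leq g^{1/2}(g-X)^3$ otherwise.

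For the off-diagonal $N(X)$, Lemma~\ref{PVlemma} alone gives only $O(g^2(g-X))$, which is a factor of $g^{1/2-\varepsilon}$ too large. To recover the missing saving I would set
\[
h(u,\chi_P) := \mathcal{L}\bigl(u/\sqrt{q},\chi_P\bigr)^2 - T_{\leq 2g}(u^2), \qquad T_{\leq 2g}(u^2) := \sum_{m=0}^{2g} c_m u^{2m},
\]
where the $c_m$ are the Taylor coefficients of $(1-v^2/q)/(1-v)^3$, so that $[u^n] h(u,\chi_P)$ is precisely the non-square part of the Dirichlet coefficient. Applying Perron's formula to $h$, averaging over $P$, and deforming the contour from $|u| = r < 1$ to $|u| = 1$ (legitimate because the zero of $u^{-2g}-u^{-2X}$ at $u=1$ cancels the pole of $(1-u)^{-1}$), one obtains
\[
N(X) = \frac{1}{2\pi i} \oint_{|u|=1} \frac{\bigl(\mathcal{M}(u) - T_{\leq 2g}(u^2)\bigr)\bigl(u^{-2g} - u^{-2X}\bigr)}{u(1-u)}\, du,
\]
where $\mathcal{M}(u) := |\mathcal{P}_{2g+1}|^{-1} \sum_P \mathcal{L}(u/\sqrt{q},\chi_P)^2$ is the shifted second moment.

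I would then split the integral into small arcs of half-width $\eta$ around the singular points $u = \pm 1$ and the complement. On the complement, Corollary~\ref{corollaryupper} gives $|\mathcal{M}(u)| \ll g^{1+\varepsilon}\min\{g, 1/\overline{2\theta}\}^2$, a Fejér-type estimate gives $|T_{\leq 2g}(u^2)| \ll \min\{g^3, 1/|\sin\theta|^3\}$, and using $|u^{-2g}-u^{-2X}|/|1-u| \ll \min(g-X, 1/|\theta|)$ yields a contribution of order $g^{1+\varepsilon}(g-X)/\eta + g^{1+\varepsilon}(g-X)^2 + (g-X)^3$. On the small arcs, I would use a recursive formula for $\mathcal{M}(u)$ (in the spirit of the one used in \cite{AF4th}) to isolate the polar piece of the shifted second moment near $u = \pm 1$; this polar piece matches the corresponding singularity of $T_{\leq 2g}(u^2)$, so that after subtraction the difference $\mathcal{M}(u) - T_{\leq 2g}(u^2)$ is smaller than either term individually by a factor of $g$, and the arc contribution becomes $O(g^{2+\varepsilon}(g-X)\eta)$. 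Choosing $\eta \sim g^{-1/2}$ balances the two pieces and produces the claimed $O_\varepsilon(g^{3/2+\varepsilon}(g-X)) + O(g^{1/2}(g-X)^3)$. The main obstacle is establishing this polar cancellation precisely: Corollary~\ref{corollaryupper} alone only shows $|\mathcal{M}|,\,|T_{\leq 2g}| = O(g^{3+\varepsilon})$ near $u = \pm 1$, which leads to the strictly weaker $O(g^{2+\varepsilon}(g-X))$, so one really needs the explicit structure of the shifted moment supplied by the recursive formula.
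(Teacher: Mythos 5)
Your outer framework (Perron, a small arc near the singular point plus its complement, Corollary \ref{corollaryupper} on the complement, $\eta\sim g^{-1/2}$) is the same as the paper's, and your diagonal computation of $D(X)$, including the absorption of $g(g-X)^2$ and $(g-X)^3$ into the stated error terms, is correct. The gap is in the small-arc step, which is exactly the heart of the proof. Your claimed ``polar cancellation'' is false for the polynomial you subtract. Writing $c_m=\sum_{\deg l=m}\tau(l^2)/|l|=\tfrac{m^2}{2\zeta_q(2)}+O(m)$, Lemma \ref{afe} together with Lemma \ref{PVlemma} gives, uniformly on $|u|=1$,
\[
\mathcal{M}(u)=\sum_{m\le g}c_mu^{2m}+u^{4g}\sum_{m\le g-1}c_mu^{-2m}+O(g^2),
\qquad\text{so}\qquad
\mathcal{M}(u)-T_{\le 2g}(u^2)=\sum_{m=g+1}^{2g}\big(c_{2g-m}-c_m\big)u^{2m}+O(g^2):
\]
the reflected piece of the approximate functional equation carries the coefficient $c_{2g-m}$ at exponent $2m$ for $m>g$, not $c_m$. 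At $u=1$ this difference is $-\tfrac{g^3}{\zeta_q(2)}+O(g^2)$ (indeed $\mathcal{M}(1)\approx\tfrac{g^3}{3\zeta_q(2)}$ while $T_{\le2g}(1)\approx\tfrac{4g^3}{3\zeta_q(2)}$), so it is \emph{not} smaller than the individual terms by a factor of $g$, and even $\int_{\mathrm{arc}}|\mathcal{M}-T_{\le2g}|\,|K|\gg g^2(g-X)$, which already exceeds the target $g^{3/2+\varepsilon}(g-X)$. A second quantitative error: the ``Fej\'er-type'' bound $|T_{\le2g}(u^2)|\ll\min\{g^3,|\sin\theta|^{-3}\}$ fails for a truncated series with increasing coefficients $c_m\asymp m^2$ (for instance $T_{\le2g}(-1)\asymp g^2$); the correct bound is only $\ll g^2\min\{g,1/\overline{2\theta}\}$, and with it the complement contribution of $T_{\le2g}$, estimated in absolute value, is of size $g^2(g-X)\log g$ --- again over budget. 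So both halves of your treatment of $N(X)$ break down as written, and the missing ingredient is precisely the one you flag but do not supply.

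What is actually needed (and what the paper does) is not a pointwise cancellation but an explicit evaluation on the short arc: insert the approximate functional equation there, remove the off-diagonal with Lemma \ref{PVlemma} at total cost $O(g^2(g-X)\theta_1)$ --- this is the only place the shortness of the arc is used, and no subtraction of a diagonal model is required --- and then compute the incomplete arc integrals of the two diagonal pieces against the Perron kernel. These reduce to oscillatory sums of the shape $\sum_{j}\sum_{n}\frac{\sin(2\pi(2n\mp j)\theta_1)}{2n\mp j}P(n)$ with $P$ a quadratic polynomial, which are evaluated by Lemma 9.4 of \cite{AF4th}; it is this evaluation that produces the main term $\tfrac{g^2(g-X)}{2\zeta_q(2)}$ together with the errors $O(g^2(g-X)\theta_1)+O(g(g-X)^3\theta_1)+O(g(g-X)\theta_1^{-1})$, after which $\theta_1=g^{-1/2}$ gives the proposition. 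Note that even if you repaired your subtraction by using the exact AFE diagonal instead of $T_{\le2g}(u^2)$ (making the arc piece $O(g^2(g-X)\eta)$, which is fine), its integral over the complement cannot be bounded in absolute value either; it equals $D(X)$ minus the same incomplete arc integral, so the sine-sum evaluation is unavoidable. Without it your argument proves only an error term of size about $g^2(g-X)$, i.e.\ nothing beyond the Andrade--Keating bound.
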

 \begin{proof}
 Applying the Perron formula we get
  \begin{align*}
E_1(X) &= \frac{1}{2 \pi i} \oint_{|u|=1}\frac{1}{|\mathcal{P}_{2g+1}|}\sum_{P \in \mathcal{P}_{2g+1}}  \mathcal{L} \Big( \frac{u}{\sqrt{q}}, \chi_P \Big)^2 \, \frac{(1-u^{2g-2X})\,du}{u^{2g+1}(1-u)}\nonumber\\
&= \frac{1}{2 \pi i}\int_{C_{1}} \frac{1}{|\mathcal{P}_{2g+1}|}\sum_{P \in \mathcal{P}_{2g+1}}  \mathcal{L} \Big( \frac{u}{\sqrt{q}}, \chi_P \Big)^2 \, \frac{(1-u^{2g-2X})\,du}{u^{2g+1}(1-u)} \nonumber \\
&\qquad\qquad+\ \frac{1}{2 \pi i}\int_{C_{2}} \frac{1}{|\mathcal{P}_{2g+1}|}\sum_{P \in \mathcal{P}_{2g+1}}  \mathcal{L} \Big( \frac{u}{\sqrt{q}}, \chi_P \Big)^2 \, \frac{(1-u^{2g-2X})\,du}{u^{2g+1}(1-u)},
 \end{align*}
 where $C_{1}$ denotes the arc of angle $4\pi\theta_1$ centered around $1$,  with $1/g\ll \theta_1=o(1)$, and $C_{2}$ is its complement. Let $E_{11}$ denote the integral over $C_1$ and $E_{12}$ the integral over $C_2$.
Note that 
\[
\Big|\frac{1-u^{2g-2X}}{1-u}\Big|\leq 2 (g-X),
\]
so there is no pole at $u = 1$. On $C_{2}$ using Corollary \ref{corollaryupper} we obtain
\begin{equation}\label{E2}
E_{12}\ll_\varepsilon g^{1+\varepsilon} (g-X)\theta_1^{-1}.
\end{equation}

On $C_{1}$ we use the approximate functional equation in Lemma \ref{afe}. For $f \neq \square$ we apply Lemma \ref{PVlemma}. In doing so we get
 \begin{align*}  \frac{1}{|\mathcal{P}_{2g+1}|}\sum_{P \in \mathcal{P}_{2g+1}}  \mathcal{L} \Big( \frac{u}{\sqrt{q}}, \chi_P \Big)^2  &=  \sum_{f \in \mathcal{M}_{\leq g}} \frac{ \tau(f^2)u^{2 \deg(f)}}{|f|}  + u^{4g} \sum_{f \in \mathcal{M}_{\leq g-1}} \frac{ \tau(f^2)}{|f| u^{2 \deg(f)}}+O(g^2),
 \end{align*}
 and hence
 \begin{align*}
 E_{11}&=\sum_{f \in \mathcal{M}_{\leq g}} \frac{ \tau(f^2)}{|f|}\frac{1}{2 \pi i}\int_{C_{1}}\frac{(1-u^{2g-2X})\,du}{u^{2g-2 \deg(f)+1}(1-u)}\\
 &\qquad\qquad+ \sum_{f \in \mathcal{M}_{\leq g-1}} \frac{ \tau(f^2)}{|f|} \frac{1}{2 \pi i}\int_{C_{1}}\frac{(1-u^{2g-2X})\,du}{u^{-2g+2 \deg(f)+1}(1-u)}+O(g^2(g-X)\theta_1).
 \end{align*}
Now
 \begin{equation}\label{series} \sum_{f \in \mathcal{M}} \tau(f^2) v^{\deg(f)} = \frac{ \mathcal{Z}(v)^3}{\mathcal{Z}(v^2)},\end{equation}
so using the Perron formula and making a change of variables in the second equation, we have
 \begin{align*}
E_{11}&=\frac{1}{(2 \pi i)^2}\oint_{|v|=r}\int_{C_{1}}\frac{ \mathcal{Z}(u^2v/q)^3 (1-u^{2g-2X})}{\mathcal{Z}(u^4v^2/q^2) (1-u)(1-v) u^{2g+1} v^{g+1}} \, du \, dv\\
 &\qquad+\frac{1}{(2 \pi i)^2}\oint_{|v|=r}\int_{C_{1}}\frac{ \mathcal{Z}(v/(u^2q))^3(1-u^{2g-2X})}{\mathcal{Z}(v^2/(u^4q^2)) (1-u) (1-v)u^{-2g+1}v^{g}} \, du \, dv+O(g^2(g-X)\theta_1)\\
&=\frac{1}{(2 \pi i)^2}\oint_{|v|=r}\int_{C_{1}}\frac{(1-u^{2g-2X})(1-v^2/q)}{uv^{g+1}(1-u)(1-v/u^2)(1-v)^3} \,du \, dv \\
 &\qquad+\frac{1}{(2 \pi i)^2}\oint_{|v|=r}\int_{C_{1}}\frac{u(1-u^{2g-2X})(1-v^2/q)}{v^{g}(1-u)(1-u^2v)(1-v)^3} \,du \, dv+O(g^2(g-X)\theta_1),
 \end{align*}
for $r<1$. We have
 \begin{align*} 
 \frac{1}{2 \pi i} \int_{C_{1}} &\frac{(1-u^{2g-2X})du}{u(1-u) (1-v/u^2)}= \sum_{j=0}^{2g-2X-1} \sum_{n=0}^{\infty}v^n \frac{1}{2 \pi i} \int_{C_{1}} u^{j-2n-1} \, du \\
 &= \sum_{j=0}^{2g-2X-1} \sum_{n=0}^{\infty}v^n \int_{-\theta_1}^{\theta_1} e^{2 \pi i \theta(j-2n)} \, d \theta = \frac{1}{ \pi } \sum_{j=0}^{2g-2X-1} \sum_{n=0}^{\infty}v^n \frac{ \sin(2 \pi (2n-j) \theta_1)}{2n-j},
 \end{align*}
 and similarly 
 $$ \frac{1}{2 \pi i} \int_{C_{1}} \frac{uv(1-u^{2g-2X}) du}{(1-u) (1-u^2v)} = \frac{1}{ \pi } \sum_{j=0}^{2g-2X-1} \sum_{n=1}^{\infty}v^n \frac{ \sin(2 \pi (2n+j) \theta_1)}{2n+j}.$$
Hence
 \begin{align*}
 E_{11} &= \frac{1}{\pi} \frac{1}{2 \pi i} \oint_{|v|=r} \sum_{j=0}^{2g-2X-1}\bigg( \sum_{n=0}^{g}v^n \frac{ \sin(2 \pi (2n-j) \theta_1)}{2n-j}+  \sum_{n=1}^{g} v^n \frac{ \sin(2 \pi (2n+j) \theta_1)}{2n+j} \bigg)\\
&\qquad\qquad\times \frac{(1-v^2/q)\,dv}{v^{g+1}(1-v)^3}+ O(g^2(g-X) \theta_1).
 \end{align*}
Enlarging the contour and evaluating the residue at $v=1$ we get that
 \begin{align*}
 E_{11} &= \frac{1}{2 \pi} \sum_{j=0}^{2g-2X-1} \bigg(\sum_{n=0}^{g}  \frac{ \sin(2 \pi (2n-j) \theta_1)}{2n-j} P(n)
 + \sum_{n=1}^{g}  \frac{ \sin(2 \pi (2n+j) \theta_1)}{2n+j}P(n) \bigg)\\
 &\qquad\qquad+O(g^2(g-X) \theta_1),
 \end{align*}
 where 
\begin{align*}
P(x)& = \Big(1-\frac{1}{q}\Big)(g-x)^2+\Big(3+\frac{1}{q}\Big)(g-x)+2\\
&=\Big(1-\frac{1}{q}\Big)x^2-\bigg(2\Big(1-\frac{1}{q}\Big)g+3+\frac{1}{q}\bigg)x+\Big(1-\frac{1}{q}\Big)g^2+\Big(3+\frac{1}{q}\Big)g+2.
\end{align*}
 Using Lemma $9.4$ in \cite{AF4th} we then obtain
 \begin{equation}
 E_{11} = \frac{g^2(g-X)}{2\zeta_q(2)}+ O \big( g^2(g-X) \theta_1\big)+O\big(g(g-X)^3\theta_1\big)+O\big(g(g-X)\theta_1^{-1}\big).
 \label{E1}
 \end{equation}

Combining equations \eqref{E2}, \eqref{E1} and choosing $\theta_1=1/\sqrt{g}$ we obtain the proposition.
 \end{proof}
 
  \begin{proposition}\label{mainprop1}
For $N$ a fixed square-free monic polynomial, $n\in\mathbb{N}$ fixed and $X<2g$, let
\begin{equation}\label{truncation2}
E_{2}(N,X,n)=\frac{1}{|\mathcal{P}_{2g+1}|} \sum_{P \in \mathcal{P}_{2g+1}}\sum_{X<\deg(f)\leq 2g+n} \frac{\big(2g+n-\deg(f)\big) \lambda(f) \chi_P(Nf)}{ \sqrt{|f|}}.
\end{equation}
 Then
 \begin{align*}
 E_2(N,X,n)&\ll_\varepsilon g^{1/4+\varepsilon}(2g-X)^2.
 \end{align*}
  \end{proposition}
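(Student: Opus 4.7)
The plan is to express $E_2(N,X,n)$ as a single contour integral and bound it by combining a trivial estimate for a Perron-type kernel with the first-moment bound from Corollary \ref{corollaryupper}. Since
\[
\chi_P(N)\,\mathcal{L}\!\left(\frac{u}{\sqrt{q}},E\otimes\chi_P\right) \ = \ \sum_{f\in\mathcal{M}}\frac{\lambda(f)\chi_P(Nf)}{\sqrt{|f|}}\,u^{\deg(f)}
\]
is a polynomial in $u$, Cauchy's integral formula on the unit circle lets us extract the weighted, truncated coefficient sum inside $E_2(N,X,n)$ as
\begin{equation*}
E_2(N,X,n) \ = \ \frac{1}{2\pi i}\oint_{|u|=1}\left[\frac{1}{|\mathcal{P}_{2g+1}|}\sum_{P\in\mathcal{P}_{2g+1}}\chi_P(N)\,\mathcal{L}\!\left(\frac{u}{\sqrt{q}},E\otimes\chi_P\right)\right]K(u)\,du,
\end{equation*}
where, after the substitution $j=2g+n-k$, the kernel is
\begin{equation*}
K(u) \ = \ \sum_{k=X+1}^{2g+n}(2g+n-k)\,u^{-k-1} \ = \ u^{-(2g+n+1)}\sum_{j=0}^{2g+n-X-1} j\,u^{j}.
\end{equation*}

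The crucial observation is that on $|u|=1$ one has the trivial bound
\[
|K(u)| \ \le\ \sum_{j=0}^{2g+n-X-1} j \ \ll\ (2g-X)^{2},
\]
where the upper index uses that $n$ is fixed. Inserting this into the contour integral, taking absolute values, and applying the first-moment estimate from Corollary \ref{corollaryupper} to the inner average (which is unaffected by the bounded factor $\chi_P(N)$), I obtain
\begin{equation*}
|E_2(N,X,n)| \ \ll\ (2g-X)^{2}\int_{0}^{2\pi} g^{1/4+\varepsilon}\,\min\!\bigl(g,\,1/\overline{2\theta}\bigr)^{-1/4}\,d\theta.
\end{equation*}
A routine estimate of the remaining $\theta$-integral, splitting at $\theta = 1/(2g)$, shows it is $O(1)$ in $g$ (the small-$\theta$ range contributes $\ll g^{-5/4}$ and the rest contributes $\int (2\theta)^{1/4}\,d\theta = O(1)$), so the claimed bound follows.

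There is no serious obstacle in executing this plan; the argument is a streamlined version of the Perron-plus-moment-bound strategy employed on the arc $C_2$ in the proof of Proposition \ref{mainprop}, but used over the whole unit circle rather than only the complement of a small arc. The one computation that genuinely deserves attention is the identification and uniform control of $K(u)$: the weight $(2g+n-\deg(f))$ would naively produce a $(1-u)^{-2}$ singularity, but it actually combines with the truncation to yield a polynomial of degree $2g+n-X-1$ whose coefficients sum to $O((2g-X)^{2})$, and this is exactly the factor that appears in the final estimate. The $g^{1/4+\varepsilon}$ factor is inherited directly from the mean value of $|\mathcal{L}(u/\sqrt{q},E\otimes\chi_P)|$ provided by Corollary \ref{corollaryupper}.
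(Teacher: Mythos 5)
Your proposal is correct and is essentially the paper's own argument: a Perron/Cauchy coefficient extraction over $|u|=1$, a uniform bound $\ll (2g-X)^2$ on the kernel (yours is written as an explicit Laurent polynomial, the paper's as a closed form with the $(1-u)^{-2}$ singularity cancelled, but they extract the same coefficients), followed by the first-moment bound of Corollary \ref{corollaryupper} and the observation that the resulting $\theta$-integral is $O(1)$.
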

  \begin{proof}
  Using the Perron formula for the sum over $f$ in \eqref{truncation2} we have
\begin{align*}
E_{2}(N,X,n)& =  \frac{1}{2 \pi i} \oint_{|u|=1}\frac{1}{|\mathcal{P}_{2g+1}|} \sum_{P \in \mathcal{P}_{2g+1}} \chi_P(N) \mathcal{L} \Big( E \otimes \chi_P, \frac{u}{\sqrt{q}} \Big) \\
&\qquad\qquad\times \Big( \frac{1-u^{2g+n-X}}{u^{2g+n}}-\frac{(2g+n-X)(1-u)}{u^{X+1}}\Big) \, \frac{du}{(1-u)^2}.
\end{align*}
Note that
\[
\Big( \frac{1-u^{2g+n-X}}{u^{2g+n}}-\frac{(2g+n-X)(1-u)}{u^{X+1}}\Big) \, \frac{1}{(1-u)^2}\ll (2g-X)^2,
\]
and so, in particular, there is no pole at $u=1$. The proposition hence follows after applying Corollary \ref{corollaryupper}.
  \end{proof}

 \section{Proof of Theorem \ref{mainthm}}

We first use the approximate functional equation in Lemma \ref{afe} to write
\begin{align*}
&\frac{1}{|\mathcal{P}_{2g+1}|} \sum_{P\in \mathcal{P}_{2g+1}} L \big( \tfrac{1}{2},\chi_P \big)^2=S_1+S_2,
\end{align*}
where
\[
S_1=\frac{1}{|\mathcal{P}_{2g+1}|} \sum_{P\in \mathcal{P}_{2g+1}}\sum_{f \in \mathcal{M}_{\leq 2g}} \frac{\tau(f)\chi_P(f)}{\sqrt{|f|}}
\]
and $S_2$ has a similar expression with $\mathcal{M}_{\leq 2g}$ being replaced by $\mathcal{M}_{\leq 2g-1}$. From Proposition \ref{mainprop} we obtain
\begin{align*}
S_1&=\frac{1}{|\mathcal{P}_{2g+1}|} \sum_{P\in \mathcal{P}_{2g+1}}\sum_{f \in \mathcal{M}_{\leq 2X}} \frac{\tau(f)\chi_P(f)}{\sqrt{|f|}}+\frac{g^2(g-X)}{2\zeta_q(2)}\\
&\qquad\qquad+ O_\varepsilon \big( g^{3/2+\varepsilon}(g-X)\big)+ O \big( g^{1/2}(g-X)^3\big).
\end{align*}
For $f\ne\square$ we apply Lemma \ref{PVlemma}. In doing so we get
 \[
 S_1=\sum_{f \in \mathcal{M}_{\leq X}} \frac{\tau(f^2)}{|f|}+\frac{g^2(g-X)}{2\zeta_q(2)}+ O_\varepsilon \big( g^{3/2+\varepsilon}(g-X)\big)+ O \big( g^{1/2}(g-X)^3\big)+O\big(q^{-g+X}g^2\big).
 \]

In view of \eqref{series} and the Perron formula, the above sum over $f$ is
 \begin{align*}
  \frac{1}{2\pi i}\oint_{|u|=r}\frac{ \mathcal{Z}(u/q)^3}{\mathcal{Z}(u^2/q^2)}\frac{du}{u^{X+1}(1-u)}=\frac{1}{2\pi i}\oint_{|u|=r}\frac{(1-u^2/q)du}{u^{X+1}(1-u)^4},
 \end{align*}
for $r<1$. By enlarging the contour of integration, passing the pole at $u=1$, we see that this is equal to
 \begin{equation*}
\frac{X^3}{6\zeta_q(2)}+X^2 +O(X).
 \end{equation*}
Thus
 \begin{equation*}
S_1 = \frac{g^3}{6 \zeta_q(2)}+g^2+ O_\varepsilon \big( g^{3/2+\varepsilon}(g-X)\big)+ O \big( g^{1/2}(g-X)^3\big)+O\big(q^{-g+X}g^2\big).
 \end{equation*} 

A similar computation leads to
$$S_2 = \frac{g^3}{6 \zeta_q(2)}+g^2 \Big(\frac{1}{2}+\frac{1}{2q}\Big)+ O_\varepsilon \big( g^{3/2+\varepsilon}(g-X)\big)+ O \big( g^{1/2}(g-X)^3\big)+O\big(q^{-g+X}g^2\big).$$

  To obtain Theorem \ref{mainthm}, we choose $X=g-[100 \log g]$.

 \section{Proof of Theorem \ref{mainthm1}}
 
 For $N$ a fixed square-free monic polynomial, let $T_E(N,X)=T_E(N,X;0)$, where
\begin{equation}\label{T_E}
T_E(N,X;\alpha) =\frac{1}{|\mathcal{P}_{2g+1}|} \sum_{P\in \mathcal{P}_{2g+1}} \sum_{f \in \mathcal{M}_{\leq X}} \frac{\lambda(f) \chi_P(Nf)}{|f|^{1/2+\alpha}}
\end{equation} 
and $|\alpha|\ll 1/g$. From Lemma \ref{afe1} we have
\begin{align*}
\epsilon^- L' ( E \otimes \chi_P, \tfrac{1}{2}) &=(\log q)\big(1-\epsilon_{2g+1}\epsilon(E)\chi_P(M)\big)\\
&\qquad\qquad\times  \sum_{\deg(f)\leq [\mathfrak{n}/2]+2g+1} \frac{\big([\mathfrak{n}/2]+2g+1-\deg(f)\big)\lambda(f) \chi_P(f)}{\sqrt{|f|}}.\end{align*}
Truncating the above sum at $\deg(f)\leq X$ and applying Proposition \ref{mainprop1} we get
\begin{align}\label{toevaluate}
&\frac{1}{|\mathcal{P}_{2g+1}|} \sum_{P\in \mathcal{P}_{2g+1}} \epsilon^{-} L'(E \otimes \chi_P,\tfrac{1}{2}) \nonumber\\
&\qquad\quad= (\log q) \big( [\mathfrak{n}/2]+2g+1\big) \Big(T_E(1, X)-\epsilon_{2g+1} \epsilon(E)T_E(M, X)\Big) \\
&\qquad\qquad\qquad+ \frac{\partial}{\partial \alpha}\Big(T_E(1,X;\alpha)-\epsilon_{2g+1} \epsilon(E) T_E(M,X;\alpha)\Big) \bigg|_{\alpha=0}+O_\varepsilon\big(g^{1/4+\varepsilon}(2g-X)^2\big).\nonumber
\end{align}
Thus we are left to evaluate $T_E(N,X;\alpha)$.

For $Nf\ne\square$  in \eqref{T_E}, we apply Lemma \ref{PVlemma}, and the contribution of these terms is $O(q^{-g+X/2}g^2)$. 
So, by the Perron formula,
\begin{align}\label{RV=0}
T_E(N,X;\alpha)& =\sum_{\substack{f \in \mathcal{M}_{\leq X} \\ Nf= \square}} \frac{ \lambda(f) }{|f|^{1/2+\alpha}}+O(q^{-g+X/2}g^2)\nonumber\\
&=\frac{1 }{2\pi i}\oint_{|u|=r} \sum_{\substack{Nf = \square}} \frac{ \lambda(f)u^{\deg(f)}}{|f|^{1/2+\alpha}}\frac{du}{u^{X+1}(1-u)} +O(q^{-g+X/2}g^2),
\end{align}
for $r<1$. We can write the sum in the integrand in terms of its Euler product as
\begin{align}\label{def_bm}
&\prod_{Q} \bigg( \sum_{j+\text{ord}_Q(N)\ \text{even}}\frac{ \lambda(Q^j) u^{j \deg(Q)}}{|Q|^{(1/2+\alpha)j}} \bigg)=\mathcal{A}_E(N;u)\,\mathcal{L} \Big(\text{Sym}^2 E, \frac{u^2}{q^{1+2\alpha}}\Big),
\end{align} where $\mathcal{A}_E(N;u)$ is some Euler product which is uniformly bounded for $|u|\leq q^{1/2-\varepsilon}$. We shift the contour in \eqref{RV=0} to $|u|=q^{1/2-\varepsilon}$, encountering a simple pole at $u=1$. 
Thus
\begin{equation}
T_E(N,X;\alpha)=\mathcal{A}_E(N;1)\,L(\text{Sym}^2 E, 1+2\alpha)+O(q^{-g+X/2}g^2)+O_\varepsilon ( q^{-X/2 + \varepsilon X} ).
\label{main_th1}
\end{equation}

From \eqref{toevaluate}, \eqref{main_th1} and Cauchy's residue theorem we obtain
\begin{align*}
\frac{1}{|\mathcal{P}_{2g+1}|} \sum_{P\in \mathcal{P}_{2g+1}} \epsilon^{-} L'(E \otimes \chi_P,\tfrac{1}{2}) &=2(\log q)\big(\mathcal{A}_E(1;1)-\epsilon_{2g+1} \epsilon(E)\mathcal{A}_E(M;1)\big)L(\text{Sym}^2 E, 1)g\\
&\quad+O_\varepsilon\big(g^{1/4+\varepsilon}(2g-X)^2\big)+O(q^{-g+X/2}g^2)+O_\varepsilon ( q^{-X/2 + \varepsilon X} ).
\end{align*}
Chossing $X=2g-[100 \log g]$ the theorem follows.
 
 \section{Checking the coefficients}
Here we check that the asymptotic formula we obtain in Theorem \ref{mainthm} agrees with the conjecture in \cite{A2}. From  \cite{A2} we have that the term involving $g^2$ in the asymptotic formula should be equal to
\begin{align}
 \frac{3 (2g)^2}{24}  & A(\tfrac12;0,0)+ \frac{6 (2g)^2}{24} A(\tfrac12;0,0)+ \frac{3 (2g)^2}{24 \log q} \big(  A_1(\tfrac12;0,0) + A_2(\tfrac12;0,0) \big), \label{coeff_check}
 \end{align}
 where
 \begin{align} \label{a_formula}
 A & (\tfrac12;z_1,z_2) = \prod_Q \Big(1- \frac{1}{|Q|^{1+z_1+z_2}} \Big)\Big(1- \frac{1}{|Q|^{1+2z_1}} \Big) \Big(1- \frac{1}{|Q|^{1+2z_2}} \Big)  \\
 & \times \frac{1}{2} \bigg(\Big(1-\frac{1}{|Q|^{1/2+z_1}} \Big)^{-1} \Big(1-\frac{1}{|Q|^{1/2+z_2}} \Big)^{-1} +\Big(1+\frac{1}{|Q|^{1/2+z_1}} \Big)^{-1} \Big(1+\frac{1}{|Q|^{1/2+z_2}} \Big)^{-1} \bigg),\nonumber
 \end{align}
 and
 $$ A_1(\tfrac12;0,0)=\frac{ \partial}{\partial z_1} A(\tfrac12;z_1,z_2)\Big|_{z_1=z_2=0}, $$
 and a similar expression holds for $A_2(1/2;0,0)$. Since $A_1(1/2;0,0)=A_2(1/2;0,0)$, it follows that the coefficient of $g^2$ in \eqref{coeff_check} is equal to
 \begin{equation} \frac{3A(1/2;0,0)}{2}+\frac{A_1(1/2;0,0)}{\log q}. \label{coeff_3}
 \end{equation}
 Using equation \eqref{a_formula}, we have
 $$A(\tfrac12;0,0)= \frac{1}{\zeta_q(2)},$$ and
 $$\frac{A_1(1/2;0,0)}{(\log q) A(1/2;0,0)} =\sum_Q \frac{3 \deg(Q)}{|Q|-1} - \sum_Q \frac{  (3|Q|+1) \deg(Q)}{|Q|^2-1}= \sum_Q \frac{2 \deg(Q)}{|Q|^2-1} = \frac{2}{q-1},$$ where the last identity follows from the logarithmic expression of $\zeta_q(s)$. 
 Then equation \eqref{coeff_3} simplifies to 
 $$\frac{3}{2 \zeta_q(2)}+\frac{2}{\zeta_q(2)(q-1)}= \frac{3}{2}+\frac{1}{2q},$$
 which matches the coefficient in Theorem \ref{mainthm}.
 \bibliographystyle{amsalpha}

\bibliography{prime_bibl} 

\newcommand{\etalchar}[1]{$^{#1}$}
\providecommand{\bysame}{\leavevmode\hbox to3em{\hrulefill}\thinspace}
\providecommand{\MR}{\relax\ifhmode\unskip\space\fi MR }
\providecommand{\MRhref}[2]{%
  \href{http://www.ams.org/mathscinet-getitem?mr=#1}{#2}
}
\providecommand{\href}[2]{#2}
\begin{thebibliography}{BFKRG19}

\bibitem[AB18]{ab}
J.~Andrade and S.~Baluyot, \emph{Small zeros of {D}irichlet {L}-functions of
  quadratic characters with prime modulus}, preprint arXiv:1802.03413 (2018).

\bibitem[AJS18]{A2}
J.~Andrade, H.~Jung, and A.~Shamesaldeen, \emph{The integral moments and ratios
  of quadratic {D}irichlet {$L$}--functions over monic irreducible polynomials
  in $\mathbb{F}_q[t]$}, preprint arXiv:1807.06347 (2018).

\bibitem[AK13]{AK}
J.~Andrade and J.~P. Keating, \emph{Mean value theorems for {$L$}-functions
  over prime polynomials for the rational function field}, Acta Arith.
  \textbf{161} (2013), no.~4, 371--385. \MR{3150889}

\bibitem[BFH90]{BFH}
D.~Bump, S.~Friedberg, and J.~Hoffstein, \emph{Nonvanishing theorems for
  {$L$}-functions of modular forms and their derivatives}, Invent. Math.
  \textbf{102} (1990), 543--618. \MR{1074487}

\bibitem[BFKRG19]{BFKR}
H.~M. Bui, A.~Florea, J.~P. Keating, and E.~Roditty-Gershon, \emph{Moments of
  quadratic twists of elliptic curve {$L$}-functions over function fields},
  preprint arXiv:1902.00568 (2019).

\bibitem[BH12]{hall}
S.~Baig and C.~Hall, \emph{Experimental data for {G}oldfeld's conjecture over
  function fields}, Exp. Math. \textbf{21} (2012), 362--374. \MR{3004252}

\bibitem[BP18]{bp}
S.~Baluyot and K.~Pratt, \emph{Dirichlet {L}-functions of quadratic characters
  of prime conductor at the central point}, preprint arXiv:1809.09992 (2018).

\bibitem[CFK{\etalchar{+}}05]{cfkrs}
J.~B. Conrey, D.~W. Farmer, J.~P. Keating, M.~O. Rubinstein, and N.~C. Snaith,
  \emph{Integral moments of {$L$}-functions}, Proc. London Math. Soc. (3)
  \textbf{91} (2005), no.~1, 33--104. \MR{2149530 (2006j:11120)}

\bibitem[Flo17]{AF4th}
A.~Florea, \emph{The fourth moment of quadratic {D}irichlet {$L$}-functions
  over function fields}, Geom. Funct. Anal. \textbf{27} (2017), 541--595.
  \MR{3655956}

\bibitem[Iwa90]{IW}
H.~Iwaniec, \emph{On the order of vanishing of modular {$L$}-functions at the
  critical point}, S\'{e}m. Th\'{e}or. Nombres Bordeaux \textbf{2} (1990),
  365--376. \MR{1081731}

\bibitem[Jut81]{jutila}
M.~Jutila, \emph{On the mean value of {$L({1\over 2},\,\chi )$} for real
  characters}, Analysis \textbf{1} (1981), no.~2, 149--161. \MR{632705
  (82m:10065)}

\bibitem[KS00]{ks2}
J.~P. Keating and N.~C. Snaith, \emph{Random matrix theory and {$L$}-functions
  at {$s=1/2$}}, Comm. Math. Phys. \textbf{214} (2000), no.~1, 91--110.
  \MR{1794267}

\bibitem[MM91]{MM}
M.~R. Murty and V.~K. Murty, \emph{Mean values of derivatives of modular
  {$L$}-series}, Ann. of Math. \textbf{133} (1991), 447--475. \MR{1109350}

\bibitem[Pet14]{petrow}
I.~Petrow, \emph{Moments of {$L'(\frac12)$} in the family of quadratic twists},
  Int. Math. Res. Not. IMRN (2014), 1576--1612.

\bibitem[Rud10]{R}
Z.~Rudnick, \emph{Traces of high powers of the {F}robenius class in the
  hyperelliptic ensemble}, Acta Arith. \textbf{143} (2010), no.~1, 81--99.
  \MR{2640060}

\bibitem[Sou00]{sound}
K.~Soundararajan, \emph{Nonvanishing of quadratic {D}irichlet {$L$}-functions
  at {$s=\frac12$}}, Ann. of Math. (2) \textbf{152} (2000), no.~2, 447--488.
  \MR{1804529 (2001k:11164)}

\bibitem[Sou09]{soundupperbounds}
\bysame, \emph{Moments of the {R}iemann zeta function}, Ann. of Math. (2)
  \textbf{170} (2009), no.~2, 981--993. \MR{2552116}

\bibitem[SY10]{soundyoung}
K.~Soundararajan and M.~P. Young, \emph{The second moment of quadratic twists
  of modular {$L$}-functions}, J. Eur. Math. Soc. (JEMS) \textbf{12} (2010),
  no.~5, 1097--1116. \MR{2677611}

\end{thebibliography}


\begin{thebibliography}{9}
\bibitem{AK}\label{AK}
J. C. Andrade, J. P. Keating, {\it Mean value theorems for L-functions over prime polynomials for the rational function field}, Acta Arith. \textbf{161} (2013), 371--385.
\bibitem{A2}\label{A2}
J. C. Andrade, H. Jung, A. Shamesaldeen {\it{The integral moments and ratios of quadratic Dirichlet $L$--functions over monic irreducible polynomials in $\mathbb{F}_q[t]$}}, preprint, https://arxiv.org/pdf/1807.06347.pdf 

\bibitem{hall}\label{hall}
S. Baig, C. Hall, {\it Experimental data for Goldfeld’s conjecture over function fields}, Exp. Math \textbf{21} (2012), 362-374
 \bibitem{BFKR}\label{BFKR}
H. M. Bui, A. Florea, J. P. Keating, E. Roditty-Gershon, {\it Moments of quadratic twists of elliptic curve L-functions over function fields}, preprint, http://arxiv.org/abs/1902.00568
\bibitem{AF4th}\label{AF4th}
A. Florea, {\it The fourth moment of $L(1/2, \chi)$ in the hyperelliptic ensemble}, Geom. Funct. Anal \textbf{27} (2017), no. 3, 541-595
\bibitem{R}\label{R}
Z. Rudnick, {\it Traces of high powers of the Frobenius class in the hyperelliptic ensemble}, Acta Arith. \textbf{143} (2010), 81--99.
\bibitem{S}\label{S}
K. Soundararajan, \textit{Nonvanishing of quadratic Dirichlet $L$-functions at $s=\frac{1}{2}$}, Annals of Math. \textbf{152} (2000), 447--488.
\bibitem{SY}\label{SY}
K. Soundararajan, M. P. Young, {\it The second moment of quadratic twists of modular L-functions}, J.
Eur. Math. Soc. (JEMS) \textbf{12} (2010), 1097--1116.

\end{thebibliography}
 
\kommentar{ }
\end{document}